 \def\dated#1{\def\thedate{#1}}
\newdimen\high%
\newdimen\ul%
\newdimen\wdth
\def\ratchet#1#2{\ifnum#1<#2\global #1=#2\fi}%
\def\ifnextchar#1#2#3{\let\@tempe
#1\def\@tempa{#2}\def\@tempb{#3}\futurelet
    \@tempc\@ifnch}%
\def\@ifnch{\ifx \@tempc \@sptoken \let\@tempd\@xifnch
      \else \ifx \@tempc \@tempe\let\@tempd\@tempa\else\let\@tempd\@tempb\fi
      \fi \@tempd}%
\def\:{\let\@sptoken= } \:  
\def\:{\@xifnch} \expandafter\def\: {\futurelet\@tempc\@ifnch}%
\let\ifnextchar\@ifnextchar
\newdimen\axis \axis=\fontdimen22\textfont2
\def\scalefactor#1{\ul=#1\ul \X@xbase=#1\X@xbase \Y@ybase=#1\Y@ybase}%
\def\fontscale#1{%
\if#1h\relax
\font\xydashfont=xydash10 scaled \magstephalf
\font\xyatipfont=xyatip10 scaled \magstephalf
\font\xybtipfont=xybtip10 scaled \magstephalf
\font\xybsqlfont=xybsql10 scaled \magstephalf
\font\xycircfont=xycirc10 scaled \magstephalf
\else
\font\xydashfont=xydash10 scaled \magstep#1%
\font\xyatipfont=xyatip10 scaled \magstep#1%
\font\xybtipfont=xybtip10 scaled \magstep#1%
\font\xybsqlfont=xybsql10 scaled \magstep#1%
\font\xycircfont=xycirc10 scaled \magstep#1%
\fi}
\def\bfig{\vcenter\bgroup\xy}
\def\efig{\endxy\egroup}
\def\car#1#2\nil{#1}%
\def\morphism{\ifnextchar({\morphismp}{\morphismp(0,0)}}%
\def\morphismp(#1){\ifnextchar|{\morphismpp(#1)}{\morphismpp(#1)|a|}}%
\def\morphismpp(#1)|#2|{\ifnextchar/{\morphismppp(#1)|#2|}%
    {\morphismppp(#1)|#2|/>/}}%
\def\morphismppp(#1)|#2|/#3/{%
    \ifnextchar<{\morphismpppp(#1)|#2|/#3/}%
    {\morphismpppp(#1)|#2|/#3/<\default,0>}}%
\def\morphismpppp(#1,#2)|#3|/#4/<#5,#6>[#7`#8;#9]{%
\xend#1\advance \xend by #5%
\yend#2\advance \yend by #6%
\domorphism(#1,#2)|#3|/#4/<#5,#6>[{#7}`{#8};{#9}]}
\def\domorphism(#1,#2)|#3|/#4/<#5,#6>[#7`#8;#9]{%
\def\next{\car#4.\nil}%
\if@\next\relax
 \if#3l%
  \ifnum #6>0%
   \POS(#1,#2)*+!!<0ex,\axis>{#7}\ar#4^-{#9} (\xend,\yend)*+!!<0ex,\axis>{#8}%
  \else%
   \POS(#1,#2)*+!!<0ex,\axis>{#7}\ar#4_-{#9} (\xend,\yend)*+!!<0ex,\axis>{#8}%
  \fi%
 \else \if#3m%
    \setbox0\hbox{$#9$}%
   \ifdim \wd0=0pt
     \POS(#1,#2)*+!!<0ex,\axis>{#7}\ar#4 (\xend,\yend)*+!!<0ex,\axis>{#8}%
   \else
     \POS(#1,#2)*+!!<0ex,\axis>{#7}\ar#4|-*+<1pt,4pt>{\labelstyle#9}
       (\xend,\yend)*+!!<0ex,\axis>{#8}%
   \fi
 \else \if#3r%
  \ifnum #6<0%
   \POS(#1,#2)*+!!<0ex,\axis>{#7}\ar#4^-{#9} (\xend,\yend)*+!!<0ex,\axis>{#8}%
  \else%
   \POS(#1,#2)*+!!<0ex,\axis>{#7}\ar#4_-{#9} (\xend,\yend)*+!!<0ex,\axis>{#8}%
  \fi%
 \else \if#3a%
  \ifnum #5>0%
   \POS(#1,#2)*+!!<0ex,\axis>{#7}\ar#4^-{#9} (\xend,\yend)*+!!<0ex,\axis>{#8}%
  \else%
   \POS(#1,#2)*+!!<0ex,\axis>{#7}\ar#4_-{#9} (\xend,\yend)*+!!<0ex,\axis>{#8}%
  \fi%
 \else \if#3b%
  \ifnum #5<0%
   \POS(#1,#2)*+!!<0ex,\axis>{#7}\ar#4^-{#9} (\xend,\yend)*+!!<0ex,\axis>{#8}%
  \else%
   \POS(#1,#2)*+!!<0ex,\axis>{#7}\ar#4_-{#9} (\xend,\yend)*+!!<0ex,\axis>{#8}%
  \fi%
 \else
   \POS(#1,#2)*+!!<0ex,\axis>{#7}\ar#4 (\xend,\yend)*+!!<0ex,\axis>{#8}%
 \fi\fi\fi\fi\fi%
\else%
 \if#3l%
  \ifnum #6>0%
   \POS(#1,#2)*+!!<0ex,\axis>{#7}\ar@{#4}^-{#9} (\xend,\yend)*+!!<0ex,\axis>{#8}%
  \else%
   \POS(#1,#2)*+!!<0ex,\axis>{#7}\ar@{#4}_-{#9} (\xend,\yend)*+!!<0ex,\axis>{#8}%
  \fi%
 \else \if#3m%
    \setbox0\hbox{$#9$}%
   \ifdim \wd0=0pt
     \POS(#1,#2)*+!!<0ex,\axis>{#7}\ar@{#4} (\xend,\yend)*+!!<0ex,\axis>{#8}%
   \else
     \POS(#1,#2)*+!!<0ex,\axis>{#7}\ar@{#4}|-*+<1pt,4pt>{\labelstyle#9}
         (\xend,\yend)*+!!<0ex,\axis>{#8}%
   \fi
 \else \if#3r%
  \ifnum #6<0%
   \POS(#1,#2)*+!!<0ex,\axis>{#7}\ar@{#4}^-{#9} (\xend,\yend)*+!!<0ex,\axis>{#8}%
  \else%
   \POS(#1,#2)*+!!<0ex,\axis>{#7}\ar@{#4}_-{#9} (\xend,\yend)*+!!<0ex,\axis>{#8}%
  \fi%
 \else \if#3a%
  \ifnum #5>0%
   \POS(#1,#2)*+!!<0ex,\axis>{#7}\ar@{#4}^-{#9} (\xend,\yend)*+!!<0ex,\axis>{#8}%
  \else%
   \POS(#1,#2)*+!!<0ex,\axis>{#7}\ar@{#4}_-{#9} (\xend,\yend)*+!!<0ex,\axis>{#8}%
  \fi%
 \else \if#3b%
  \ifnum #5<0%
   \POS(#1,#2)*+!!<0ex,\axis>{#7}\ar@{#4}^-{#9} (\xend,\yend)*+!!<0ex,\axis>{#8}%
  \else%
   \POS(#1,#2)*+!!<0ex,\axis>{#7}\ar@{#4}_-{#9} (\xend,\yend)*+!!<0ex,\axis>{#8}%
  \fi%
 \else
   \POS(#1,#2)*+!!<0ex,\axis>{#7}\ar@{#4} (\xend,\yend)*+!!<0ex,\axis>{#8}%
 \fi\fi\fi\fi\fi
\fi\ignorespaces}%
\def\vector(#1,#2)/#3/<#4,#5>{%
 \xend#1 \yend#2 \advance\xend by #4 \advance\yend by #5
     \POS(#1,#2)\ar#3 (\xend,\yend)}
\def\squarepppp(#1,#2)|#3|/#4`#5`#6`#7/<#8>[#9]{%
\xpos#1\ypos#2%
\def\next|##1##2##3##4|{%
 \def\xa{##1}\def\xb{##2}\def\xc{##3}\def\xd{##4}\ignorespaces}%
\next|#3|%
\def\next<##1,##2>{\deltax=##1\deltay=##2\ignorespaces}%
\next<#8>%
\def\next[##1`##2`##3`##4;##5`##6`##7`##8]{%
    \def\nodea{##1}\def\nodeb{##2}\def\nodec{##3}\def\noded{##4}%
    \def\labela{##5}\def\labelb{##6}\def\labelc{##7}\def\labeld{##8}\ignorespaces}%
\next[#9]%
\morphism(\xpos,\ypos)|\xd|/{#7}/<\deltax,0>[\nodec`\noded;\labeld]%
\advance \ypos by \deltay%
\morphism(\xpos,\ypos)|\xb|/{#5}/<0,-\deltay>[\nodea`\nodec;\labelb]%
\morphism(\xpos,\ypos)|\xa|/{#4}/<\deltax,0>[\nodea`\nodeb;\labela]%
 \advance \xpos by \deltax%
\morphism(\xpos,\ypos)|\xc|/{#6}/<0,-\deltay>[\nodeb`\noded;\labelc]%
\ignorespaces}%
\def\square{\ifnextchar({\squarep}{\squarep(0,0)}}%
\def\squarep(#1){\ifnextchar|{\squarepp(#1)}{\squarepp(#1)|alrb|}}%
\def\squarepp(#1)|#2|{\ifnextchar/{\squareppp(#1)|#2|}%
    {\squareppp(#1)|#2|/>`>`>`>/}}%
\def\squareppp(#1)|#2|/#3`#4`#5`#6/{%
    \ifnextchar<{\squarepppp(#1)|#2|/#3`#4`#5`#6/}%
    {\squarepppp(#1)|#2|/#3`#4`#5`#6/<\default,\default>}}%
\def\ptrianglepppp(#1,#2)|#3|/#4`#5`#6/<#7>[#8]{%
\xpos#1\ypos#2%
\def\next|##1##2##3|{\def\xa{##1}\def\xb{##2}\def\xc{##3}}%
\next|#3|%
\def\next<##1,##2>{\deltax=##1\deltay=##2\ignorespaces}%
\next<#7>%
\def\next[##1`##2`##3;##4`##5`##6]{%
    \def\nodea{##1}\def\nodeb{##2}\def\nodec{##3}%
    \def\labela{##4}\def\labelb{##5}\def\labelc{##6}}%
\next[#8]%
\advance\ypos by \deltay%
\morphism(\xpos,\ypos)|\xa|/{#4}/<\deltax,0>[\nodea`\nodeb;\labela]%
\morphism(\xpos,\ypos)|\xb|/{#5}/<0,-\deltay>[\nodea`\nodec;\labelb]%
\advance\xpos by \deltax%
\morphism(\xpos,\ypos)|\xc|/{#6}/<-\deltax,-\deltay>[\nodeb`\nodec;\labelc]%
\ignorespaces}%
\def\qtrianglepppp(#1,#2)|#3|/#4`#5`#6/<#7>[#8]{%
\xpos#1\ypos#2%
\def\next|##1##2##3|{\def\xa{##1}\def\xb{##2}\def\xc{##3}}%
\next|#3|%
\def\next<##1,##2>{\deltax=##1\deltay=##2\ignorespaces}%
\next<#7>%
\def\next[##1`##2`##3;##4`##5`##6]{%
    \def\nodea{##1}\def\nodeb{##2}\def\nodec{##3}%
    \def\labela{##4}\def\labelb{##5}\def\labelc{##6}}%
\next[#8]%
\advance\ypos by \deltay%
\morphism(\xpos,\ypos)|\xa|/{#4}/<\deltax,0>[\nodea`\nodeb;\labela]%
\morphism(\xpos,\ypos)|\xb|/{#5}/<\deltax,-\deltay>[\nodea`\nodec;\labelb]%
\advance\xpos by \deltax%
\morphism(\xpos,\ypos)|\xc|/{#6}/<0,-\deltay>[\nodeb`\nodec;\labelc]%
\ignorespaces}%
\def\dtrianglepppp(#1,#2)|#3|/#4`#5`#6/<#7>[#8]{%
\xpos#1\ypos#2%
\def\next|##1##2##3|{\def\xa{##1}\def\xb{##2}\def\xc{##3}}%
\next|#3|%
\def\next<##1,##2>{\deltax=##1\deltay=##2\ignorespaces}%
\next<#7>%
\def\next[##1`##2`##3;##4`##5`##6]{%
    \def\nodea{##1}\def\nodeb{##2}\def\nodec{##3}%
    \def\labela{##4}\def\labelb{##5}\def\labelc{##6}}%
\next[#8]%
\morphism(\xpos,\ypos)|\xc|/{#6}/<\deltax,0>[\nodeb`\nodec;\labelc]%
\advance\ypos by \deltay\advance \xpos by \deltax%
\morphism(\xpos,\ypos)|\xa|/{#4}/<-\deltax,-\deltay>[\nodea`\nodeb;\labela]%
\morphism(\xpos,\ypos)|\xb|/{#5}/<0,-\deltay>[\nodea`\nodec;\labelb]%
\ignorespaces}%
\def\btrianglepppp(#1,#2)|#3|/#4`#5`#6/<#7>[#8]{%
\xpos#1\ypos#2%
\def\next|##1##2##3|{\def\xa{##1}\def\xb{##2}\def\xc{##3}}%
\next|#3|%
\def\next<##1,##2>{\deltax=##1\deltay=##2\ignorespaces}%
\next<#7>%
\def\next[##1`##2`##3;##4`##5`##6]{%
    \def\nodea{##1}\def\nodeb{##2}\def\nodec{##3}%
    \def\labela{##4}\def\labelb{##5}\def\labelc{##6}}%
\next[#8]%
\morphism(\xpos,\ypos)|\xc|/{#6}/<\deltax,0>[\nodeb`\nodec;\labelc]%
\advance\ypos by \deltay%
\morphism(\xpos,\ypos)|\xa|/{#4}/<0,-\deltay>[\nodea`\nodeb;\labela]%
\morphism(\xpos,\ypos)|\xb|/{#5}/<\deltax,-\deltay>[\nodea`\nodec;\labelb]%
\ignorespaces}%
\def\Atrianglepppp(#1,#2)|#3|/#4`#5`#6/<#7>[#8]{%
\xpos#1\ypos#2%
\def\next|##1##2##3|{\def\xa{##1}\def\xb{##2}\def\xc{##3}}%
\next|#3|%
\def\next<##1,##2>{\deltax=##1\deltay=##2\ignorespaces}%
\next<#7>%
\def\next[##1`##2`##3;##4`##5`##6]{%
    \def\nodea{##1}\def\nodeb{##2}\def\nodec{##3}%
    \def\labela{##4}\def\labelb{##5}\def\labelc{##6}}%
\next[#8]%
\multiply\deltax by 2%
\morphism(\xpos,\ypos)|\xc|/{#6}/<\deltax,0>[\nodeb`\nodec;\labelc]%
\divide\deltax by 2
\advance\ypos by \deltay\advance\xpos by \deltax%
\morphism(\xpos,\ypos)|\xa|/{#4}/<-\deltax,-\deltay>[\nodea`\nodeb;\labela]%
\morphism(\xpos,\ypos)|\xb|/{#5}/<\deltax,-\deltay>[\nodea`\nodec;\labelb]%
\ignorespaces}%
\def\Vtrianglepppp(#1,#2)|#3|/#4`#5`#6/<#7>[#8]{%
\xpos#1\ypos#2%
\def\next|##1##2##3|{\def\xa{##1}\def\xb{##2}\def\xc{##3}}%
\next|#3|%
\def\next<##1,##2>{\deltax=##1\deltay=##2\ignorespaces}%
\next<#7>%
\def\next[##1`##2`##3;##4`##5`##6]{%
    \def\nodea{##1}\def\nodeb{##2}\def\nodec{##3}%
    \def\labela{##4}\def\labelb{##5}\def\labelc{##6}}%
\next[#8]%
\advance\ypos by \deltay%
\morphism(\xpos,\ypos)|\xb|/{#5}/<\deltax,-\deltay>[\nodea`\nodec;\labelb]%
\multiply\deltax by 2%
\morphism(\xpos,\ypos)|\xa|/{#4}/<\deltax,0>[\nodea`\nodeb;\labela]%
\advance\xpos by \deltax \divide \deltax by 2
\morphism(\xpos,\ypos)|\xc|/{#6}/<-\deltax,-\deltay>[\nodeb`\nodec;\labelc]%
\ignorespaces}%
\def\Ctrianglepppp(#1,#2)|#3|/#4`#5`#6/<#7>[#8]{%
\xpos#1\ypos#2%
\def\next|##1##2##3|{\def\xa{##1}\def\xb{##2}\def\xc{##3}}%
\next|#3|%
\def\next<##1,##2>{\deltax=##1\deltay=##2\ignorespaces}%
\next<#7>%
\def\next[##1`##2`##3;##4`##5`##6]{%
    \def\nodea{##1}\def\nodeb{##2}\def\nodec{##3}%
    \def\labela{##4}\def\labelb{##5}\def\labelc{##6}}%
\next[#8]%
\advance \ypos by \deltay%
\morphism(\xpos,\ypos)|\xc|/{#6}/<\deltax,-\deltay>[\nodeb`\nodec;\labelc]%
\advance\ypos by \deltay \advance \xpos by \deltax%
\morphism(\xpos,\ypos)|\xa|/{#4}/<-\deltax,-\deltay>[\nodea`\nodeb;\labela]%
\multiply\deltay by 2%
\morphism(\xpos,\ypos)|\xb|/{#5}/<0,-\deltay>[\nodea`\nodec;\labelb]%
\ignorespaces}%
\def\Dtrianglepppp(#1,#2)|#3|/#4`#5`#6/<#7>[#8]{%
\xpos#1\ypos#2%
\def\next|##1##2##3|{\def\xa{##1}\def\xb{##2}\def\xc{##3}}%
\next|#3|%
\def\next<##1,##2>{\deltax=##1\deltay=##2\ignorespaces}%
\next<#7>%
\def\next[##1`##2`##3;##4`##5`##6]{%
    \def\nodea{##1}\def\nodeb{##2}\def\nodec{##3}%
    \def\labela{##4}\def\labelb{##5}\def\labelc{##6}}%
\next[#8]%
\advance\xpos by \deltax \advance\ypos by \deltay%
\morphism(\xpos,\ypos)|\xc|/{#6}/<-\deltax,-\deltay>[\nodeb`\nodec;\labelc]%
\advance\xpos by -\deltax \advance\ypos by \deltay%
\morphism(\xpos,\ypos)|\xb|/{#5}/<\deltax,-\deltay>[\nodea`\nodeb;\labelb]%
\multiply \deltay by 2%
\morphism(\xpos,\ypos)|\xa|/{#4}/<0,-\deltay>[\nodea`\nodec;\labela]%
\ignorespaces}%
\def\ptrianglep(#1){\ifnextchar|{\ptrianglepp(#1)}{\ptrianglepp(#1)|alr|}}%
\def\ptrianglepp(#1)|#2|{\ifnextchar/{\ptriangleppp(#1)|#2|}%
    {\ptriangleppp(#1)|#2|/>`>`>/}}%
\def\ptriangleppp(#1)|#2|/#3`#4`#5/{%
    \ifnextchar<{\ptrianglepppp(#1)|#2|/#3`#4`#5/}%
    {\ptrianglepppp(#1)|#2|/#3`#4`#5/<\default,\default>}}%
\def\qtrianglep(#1){\ifnextchar|{\qtrianglepp(#1)}{\qtrianglepp(#1)|alr|}}%
\def\qtrianglepp(#1)|#2|{\ifnextchar/{\qtriangleppp(#1)|#2|}%
    {\qtriangleppp(#1)|#2|/>`>`>/}}%
\def\qtriangleppp(#1)|#2|/#3`#4`#5/{%
    \ifnextchar<{\qtrianglepppp(#1)|#2|/#3`#4`#5/}%
    {\qtrianglepppp(#1)|#2|/#3`#4`#5/<\default,\default>}}%
\def\dtrianglep(#1){\ifnextchar|{\dtrianglepp(#1)}{\dtrianglepp(#1)|lrb|}}%
\def\dtrianglepp(#1)|#2|{\ifnextchar/{\dtriangleppp(#1)|#2|}%
    {\dtriangleppp(#1)|#2|/>`>`>/}}%
\def\dtriangleppp(#1)|#2|/#3`#4`#5/{%
    \ifnextchar<{\dtrianglepppp(#1)|#2|/#3`#4`#5/}%
    {\dtrianglepppp(#1)|#2|/#3`#4`#5/<\default,\default>}}%
\def\btrianglep(#1){\ifnextchar|{\btrianglepp(#1)}{\btrianglepp(#1)|lrb|}}%
\def\btrianglepp(#1)|#2|{\ifnextchar/{\btriangleppp(#1)|#2|}%
    {\btriangleppp(#1)|#2|/>`>`>/}}%
\def\btriangleppp(#1)|#2|/#3`#4`#5/{%
    \ifnextchar<{\btrianglepppp(#1)|#2|/#3`#4`#5/}%
    {\btrianglepppp(#1)|#2|/#3`#4`#5/<\default,\default>}}%
\def\Atrianglep(#1){\ifnextchar|{\Atrianglepp(#1)}{\Atrianglepp(#1)|lrb|}}%
\def\Atrianglepp(#1)|#2|{\ifnextchar/{\Atriangleppp(#1)|#2|}%
    {\Atriangleppp(#1)|#2|/>`>`>/}}%
\def\Atriangleppp(#1)|#2|/#3`#4`#5/{%
    \ifnextchar<{\Atrianglepppp(#1)|#2|/#3`#4`#5/}%
    {\Atrianglepppp(#1)|#2|/#3`#4`#5/<\default,\default>}}%
\def\Vtriangle{\ifnextchar({\Vtrianglep}{\Vtrianglep(0,0)}}%
\def\Vtrianglep(#1){\ifnextchar|{\Vtrianglepp(#1)}{\Vtrianglepp(#1)|alb|}}%
\def\Vtrianglepp(#1)|#2|{\ifnextchar/{\Vtriangleppp(#1)|#2|}%
    {\Vtriangleppp(#1)|#2|/>`>`>/}}%
\def\Vtriangleppp(#1)|#2|/#3`#4`#5/{%
    \ifnextchar<{\Vtrianglepppp(#1)|#2|/#3`#4`#5/}%
    {\Vtrianglepppp(#1)|#2|/#3`#4`#5/<\default,\default>}}%
\def\Ctrianglep(#1){\ifnextchar|{\Ctrianglepp(#1)}{\Ctrianglepp(#1)|arb|}}%
\def\Ctrianglepp(#1)|#2|{\ifnextchar/{\Ctriangleppp(#1)|#2|}%
    {\Ctriangleppp(#1)|#2|/>`>`>/}}%
\def\Ctriangleppp(#1)|#2|/#3`#4`#5/{%
    \ifnextchar<{\Ctrianglepppp(#1)|#2|/#3`#4`#5/}%
    {\Ctrianglepppp(#1)|#2|/#3`#4`#5/<\default,\default>}}%
\def\Dtrianglep(#1){\ifnextchar|{\Dtrianglepp(#1)}{\Dtrianglepp(#1)|alb|}}%
\def\Dtrianglepp(#1)|#2|{\ifnextchar/{\Dtriangleppp(#1)|#2|}%
    {\Dtriangleppp(#1)|#2|/>`>`>/}}%
\def\Dtriangleppp(#1)|#2|/#3`#4`#5/{%
    \ifnextchar<{\Dtrianglepppp(#1)|#2|/#3`#4`#5/}%
    {\Dtrianglepppp(#1)|#2|/#3`#4`#5/<\default,\default>}}%
\def\Atrianglepairpppp(#1)|#2|/#3`#4`#5`#6`#7/<#8>[#9]{%
\def\next(##1,##2){\xpos##1\ypos##2}%
\next(#1)%
\def\next|##1##2##3##4##5|{\def\xa{##1}\def\xb{##2}%
\def\xc{##3}\def\xd{##4}\def\xe{##5}}%
\next|#2|%
\def\next<##1,##2>{\deltax=##1\deltay=##2\ignorespaces}%
\next<#8>%
\def\next[##1`##2`##3`##4;##5`##6`##7`##8`##9]{%
 \def\nodea{##1}\def\nodeb{##2}\def\nodec{##3}\def\noded{##4}%
 \def\labela{##5}\def\labelb{##6}\def\labelc{##7}\def\labeld{##8}\def\labele{##9}}%
\next[#9]%
\morphism(\xpos,\ypos)|\xd|/{#6}/<\deltax,0>[\nodeb`\nodec;\labeld]%
\advance\xpos by \deltax%
\morphism(\xpos,\ypos)|\xe|/{#7}/<\deltax,0>[\nodec`\noded;\labele]%
\advance\ypos by \deltay%
\morphism(\xpos,\ypos)|\xa|/{#3}/<-\deltax,-\deltay>[\nodea`\nodeb;\labela]%
\morphism(\xpos,\ypos)|\xb|/{#4}/<0,-\deltay>[\nodea`\nodec;\labelb]%
\morphism(\xpos,\ypos)|\xc|/{#5}/<\deltax,-\deltay>[\nodea`\noded;\labelc]%
\ignorespaces}%
\def\Vtrianglepairpppp(#1)|#2|/#3`#4`#5`#6`#7/<#8>[#9]{%
\def\next(##1,##2){\xpos##1\ypos##2}%
\next(#1)%
\def\next|##1##2##3##4##5|{\def\xa{##1}\def\xb{##2}%
\def\xc{##3}\def\xd{##4}\def\xe{##5}}%
\next|#2|%
\def\next<##1,##2>{\deltax=##1\deltay=##2\ignorespaces}%
\next<#8>%
\def\next[##1`##2`##3`##4;##5`##6`##7`##8`##9]{%
 \def\nodea{##1}\def\nodeb{##2}\def\nodec{##3}\def\noded{##4}%
 \def\labela{##5}\def\labelb{##6}\def\labelc{##7}\def\labeld{##8}\def\labele{##9}}%
\next[#9]%
\advance\ypos by \deltay%
\morphism(\xpos,\ypos)|\xa|/{#3}/<\deltax,0>[\nodea`\nodeb;\labela]%
\morphism(\xpos,\ypos)|\xc|/{#5}/<\deltax,-\deltay>[\nodea`\noded;\labelc]%
\advance\xpos by \deltax%
\morphism(\xpos,\ypos)|\xb|/{#4}/<\deltax,0>[\nodeb`\nodec;\labelb]%
\morphism(\xpos,\ypos)|\xd|/{#6}/<0,-\deltay>[\nodeb`\noded;\labeld]%
\advance\xpos by \deltax%
\morphism(\xpos,\ypos)|\xe|/{#7}/<-\deltax,-\deltay>[\nodec`\noded;\labele]%
\ignorespaces}%
\def\Ctrianglepairpppp(#1)|#2|/#3`#4`#5`#6`#7/<#8>[#9]{%
\def\next(##1,##2){\xpos##1\ypos##2}%
\next(#1)%
\def\next|##1##2##3##4##5|{\def\xa{##1}\def\xb{##2}%
\def\xc{##3}\def\xd{##4}\def\xe{##5}}%
\next|#2|%
\def\next<##1,##2>{\deltax=##1\deltay=##2\ignorespaces}%
\next<#8>%
\def\next[##1`##2`##3`##4;##5`##6`##7`##8`##9]{%
 \def\nodea{##1}\def\nodeb{##2}\def\nodec{##3}\def\noded{##4}%
 \def\labela{##5}\def\labelb{##6}\def\labelc{##7}\def\labeld{##8}\def\labele{##9}}%
\next[#9]%
\advance\ypos by \deltay%
\morphism(\xpos,\ypos)|\xe|/{#7}/<0,-\deltay>[\nodec`\noded;\labele]%
\advance\xpos by -\deltax%
\morphism(\xpos,\ypos)|\xc|/{#5}/<\deltax,0>[\nodeb`\nodec;\labelc]%
\morphism(\xpos,\ypos)|\xd|/{#6}/<\deltax,-\deltay>[\nodeb`\noded;\labeld]%
\advance\ypos by \deltay%
\advance\xpos by \deltax%
\morphism(\xpos,\ypos)|\xa|/{#3}/<-\deltax,-\deltay>[\nodea`\nodeb;\labela]%
\morphism(\xpos,\ypos)|\xb|/{#4}/<0,-\deltay>[\nodea`\nodec;\labelb]%
\ignorespaces}%
\def\Dtrianglepairpppp(#1)|#2|/#3`#4`#5`#6`#7/<#8>[#9]{%
\def\next(##1,##2){\xpos##1\ypos##2}%
\next(#1)%
\def\next|##1##2##3##4##5|{\def\xa{##1}\def\xb{##2}%
\def\xc{##3}\def\xd{##4}\def\xe{##5}}%
\next|#2|%
\def\next<##1,##2>{\deltax=##1\deltay=##2\ignorespaces}%
\next<#8>%
\def\next[##1`##2`##3`##4;##5`##6`##7`##8`##9]{%
 \def\nodea{##1}\def\nodeb{##2}\def\nodec{##3}\def\noded{##4}%
 \def\labela{##5}\def\labelb{##6}\def\labelc{##7}\def\labeld{##8}\def\labele{##9}}%
\next[#9]%
\advance\ypos by \deltay%
\morphism(\xpos,\ypos)|\xc|/{#5}/<\deltax,0>[\nodeb`\nodec;\labelc]%
\morphism(\xpos,\ypos)|\xd|/{#6}/<0,-\deltay>[\nodeb`\noded;\labeld]%
\advance\ypos by \deltay%
\morphism(\xpos,\ypos)|\xa|/{#3}/<0,-\deltay>[\nodea`\nodeb;\labela]%
\morphism(\xpos,\ypos)|\xb|/{#4}/<\deltax,-\deltay>[\nodea`\nodec;\labelb]%
\advance\ypos by -\deltay%
\advance\xpos by \deltax%
\morphism(\xpos,\ypos)|\xe|/{#7}/<-\deltax,-\deltay>[\nodec`\noded;\labele]%
\ignorespaces}%
\def\Atrianglepairp(#1){\ifnextchar|{\Atrianglepairpp(#1)}%
{\Atrianglepairpp(#1)|lmrbb|}}%
\def\Atrianglepairpp(#1)|#2|{\ifnextchar/{\Atrianglepairppp(#1)|#2|}%
    {\Atrianglepairppp(#1)|#2|/>`>`>`>`>/}}%
\def\Atrianglepairppp(#1)|#2|/#3`#4`#5`#6`#7/{%
    \ifnextchar<{\Atrianglepairpppp(#1)|#2|/#3`#4`#5`#6`#7/}%
    {\Atrianglepairpppp(#1)|#2|/#3`#4`#5`#6`#7/<\default,\default>}}%
\def\Vtrianglepairp(#1){\ifnextchar|{\Vtrianglepairpp(#1)}%
{\Vtrianglepairpp(#1)|aalmr|}}%
\def\Vtrianglepairpp(#1)|#2|{\ifnextchar/{\Vtrianglepairppp(#1)|#2|}%
    {\Vtrianglepairppp(#1)|#2|/>`>`>`>`>/}}%
\def\Vtrianglepairppp(#1)|#2|/#3`#4`#5`#6`#7/{%
    \ifnextchar<{\Vtrianglepairpppp(#1)|#2|/#3`#4`#5`#6`#7/}%
    {\Vtrianglepairpppp(#1)|#2|/#3`#4`#5`#6`#7/<\default,\default>}}%
\def\Ctrianglepairp(#1){\ifnextchar|{\Ctrianglepairpp(#1)}%
{\Ctrianglepairpp(#1)|lrmlr|}}%
\def\Ctrianglepairpp(#1)|#2|{\ifnextchar/{\Ctrianglepairppp(#1)|#2|}%
    {\Ctrianglepairppp(#1)|#2|/>`>`>`>`>/}}%
\def\Ctrianglepairppp(#1)|#2|/#3`#4`#5`#6`#7/{%
    \ifnextchar<{\Ctrianglepairpppp(#1)|#2|/#3`#4`#5`#6`#7/}%
    {\Ctrianglepairpppp(#1)|#2|/#3`#4`#5`#6`#7/<\default,\default>}}%
\def\Dtrianglepairp(#1){\ifnextchar|{\Dtrianglepairpp(#1)}%
{\Dtrianglepairpp(#1)|lrmlr|}}%
\def\Dtrianglepairpp(#1)|#2|{\ifnextchar/{\Dtrianglepairppp(#1)|#2|}%
    {\Dtrianglepairppp(#1)|#2|/>`>`>`>`>/}}%
\def\Dtrianglepairppp(#1)|#2|/#3`#4`#5`#6`#7/{%
    \ifnextchar<{\Dtrianglepairpppp(#1)|#2|/#3`#4`#5`#6`#7/}%
    {\Dtrianglepairpppp(#1)|#2|/#3`#4`#5`#6`#7/<\default,\default>}}%
\def\pplace[#1](#2,#3)[#4]{\POS(#2,#3)*+!!<0ex,\axis>!#1{#4}\ignorespaces}%
\def\cplace(#1,#2)[#3]{\POS(#1,#2)*+!!<0ex,\axis>{#3}\ignorespaces}%
\def\pullback#1]#2]{\square#1]\trident#2]\ignorespaces}%
\def\tridentppp|#1#2#3|/#4`#5`#6/<#7,#8>[#9]{%
\def\next[##1;##2`##3`##4]{\def\nodee{##1}\def\labele{##2}%
   \def\labelf{##3}\def\labelg{##4}}%
\next[#9]%
\advance \xpos by -\deltax%
\advance \xpos by -#7\advance \ypos by #8%
\advance\deltax by #7%
\morphism(\xpos,\ypos)|#1|/{#4}/<\deltax,-#8>[\nodee`\nodeb;\labele]%
\advance\deltax by -#7%
\morphism(\xpos,\ypos)|#2|/{#5}/<#7,-#8>[\nodee`\nodea;\labelf]%
\advance\deltay by #8%
\morphism(\xpos,\ypos)|#3|/{#6}/<#7,-\deltay>[\nodee`\nodec;\labelg]%
\ignorespaces}%
\def\trident{\ifnextchar|{\tridentp}{\tridentp|amb|}}%
\def\tridentp|#1|{\ifnextchar/{\tridentpp|#1|}{\tridentpp|#1|/{>}`{>}`{>}/}}%
\def\tridentpp|#1|/#2/{\ifnextchar<{\tridentppp|#1|/#2/}%
  {\tridentppp|#1|/#2/<500,500>}}%
\def\setmorphismwidth#1#2#3#4{%
 \setbox0=\hbox{$#1{\labelstyle#3#3}#2$}#4=\wd0%
 \divide #4 by 2 \divide #4 by \ul%
 \advance #4 by 350 \ratchet{#4}{500}}%
\def\setSquarewidth[#1`#2`#3`#4;#5`#6`#7`#8]{%
 \setmorphismwidth{#1}{#2}{#5}{\topw}%
 \setmorphismwidth{#3}{#4}{#8}{\botw}%
\ratchet{\topw}{\botw}}%
\def\Squarepppp(#1)|#2|/#3/<#4>[#5]{%
 \setSquarewidth[#5]%
 \squarepppp(#1)|#2|/#3/<\topw,#4>[#5]%
\ignorespaces}%
\def\Squarep(#1){\ifnextchar|{\Squarepp(#1)}{\Squarepp(#1)|alrb|}}%
\def\Squarepp(#1)|#2|{\ifnextchar/{\Squareppp(#1)|#2|}%
    {\Squareppp(#1)|#2|/>`>`>`>/}}%
\def\Squareppp(#1)|#2|/#3`#4`#5`#6/{%
    \ifnextchar<{\Squarepppp(#1)|#2|/#3`#4`#5`#6/}%
    {\Squarepppp(#1)|#2|/#3`#4`#5`#6/<\default>}}%
\def\hSquarespppp(#1,#2)|#3|/#4/<#5>[#6;#7]{%
\Xpos=#1\Ypos=#2%
\def\next|##1##2##3##4##5##6##7|{%
 \def\Xa{##1}\def\Xb{##2}\def\Xc{##3}\def\Xd{##4}%
 \def\Xe{##5}\def\Xf{##6}\def\Xg{##7}}%
\next|#3|%
\deltaY=#5%
\def\next[##1`##2`##3`##4`##5`##6]{%
 \def\Nodea{##1}\def\Nodeb{##2}\def\Nodec{##3}%
 \def\Noded{##4}\def\Nodee{##5}\def\Nodef{##6}}%
\next[#6]%
\def\next[##1`##2`##3`##4`##5`##6`##7]{%
 \def\Labela{##1}\def\Labelb{##2}\def\Labelc{##3}\def\Labeld{##4}%
 \def\Labele{##5}\def\Labelf{##6}\def\Labelg{##7}}%
\next[#7]%
\dohSquares/#4/}%
\def\dohSquares/#1`#2`#3`#4`#5`#6`#7/{%
\Squarepppp(\Xpos,\Ypos)|\Xa\Xc\Xd\Xf|/#1`#3`#4`#6/<\deltaY>%
 [\Nodea`\Nodeb`\Noded`\Nodee;\Labela`\Labelc`\Labeld`\Labelf]%
 \advance \Xpos by \topw
\Squarepppp(\Xpos,\Ypos)|\Xb\Xd\Xe\Xg|/#2``#5`#7/<\deltaY>%
[\Nodeb`\Nodec`\Nodee`\Nodef;\Labelb``\Labele`\Labelg]%
\ignorespaces}%
\def\hSquaresp(#1){\ifnextchar|{\hSquarespp(#1)}{\hSquarespp%
(#1)|aalmrbb|}}%
\def\hSquarespp(#1)|#2|{\ifnextchar/{\hSquaresppp(#1)|#2|}%
    {\hSquaresppp(#1)|#2|/>`>`>`>`>`>`>/}}%
\def\hSquaresppp(#1)|#2|/#3/{%
    \ifnextchar<{\hSquarespppp(#1)|#2|/#3/}%
    {\hSquarespppp(#1)|#2|/#3/<\default>}}%
\def\vSquarespppp(#1,#2)|#3|/#4/<#5,#6>[#7;#8]{%
\Xpos=#1\Ypos=#2%
\def\next|##1##2##3##4##5##6##7|{%
 \def\Xa{##1}\def\Xb{##2}\def\Xc{##3}\def\Xd{##4}%
 \def\Xe{##5}\def\Xf{##6}\def\Xg{##7}}%
\next|#3|%
\deltaX=#5%
\deltaY=#6%
\def\next[##1`##2`##3`##4`##5`##6]{%
 \def\Nodea{##1}\def\Nodeb{##2}\def\Nodec{##3}%
 \def\Noded{##4}\def\Nodee{##5}\def\Nodef{##6}}%
\next[#7]%
\def\next[##1`##2`##3`##4`##5`##6`##7]{%
 \def\Labela{##1}\def\Labelb{##2}\def\Labelc{##3}\def\Labeld{##4}%
 \def\Labele{##5}\def\Labelf{##6}\def\Labelg{##7}}%
\next[#8]%
\dovSquares/#4/\ignorespaces}%
\def\dovSquares/#1`#2`#3`#4`#5`#6`#7/{%
\setmorphismwidth{\Nodea}{\Nodeb}{\Labela}{\topw}%
\setmorphismwidth{\Nodec}{\Noded}{\Labeld}{\botw}%
\ratchet{\topw}{\botw}%
\setmorphismwidth{\Nodee}{\Nodef}{\Labelg}{\botw}%
\ratchet{\topw}{\botw}%
\square(\Xpos,\Ypos)|\Xd\Xe\Xf\Xg|/`#5`#6`#7/<\topw,\deltaX>%
 [\Nodec`\Noded`\Nodee`\Nodef;`\Labele`\Labelf`\Labelg]%
\advance \Ypos by \deltaX%
\square(\Xpos,\Ypos)|\Xa\Xb\Xc\Xd|/#1`#2`#3`#4/<\topw,\deltaY>%
 [\Nodea`\Nodeb`\Nodec`\Noded;\Labela`\Labelb`\Labelc`\Labeld]%
}%
\def\vSquaresp(#1){\ifnextchar|{\vSquarespp(#1)}{\vSquarespp%
(#1)|alrmlrb|}}%
\def\vSquarespp(#1)|#2|{\ifnextchar/{\vSquaresppp(#1)|#2|}%
    {\vSquaresppp(#1)|#2|/>`>`>`>`>`>`>/}}%
\def\vSquaresppp(#1)|#2|/#3/{%
    \ifnextchar<{\vSquarespppp(#1)|#2|/#3/}%
    {\vSquarespppp(#1)|#2|/#3/<\default,\default>}}%
\def\osquarepppp(#1)|#2|/#3`#4`#5`#6/<#7>[#8]{\squarepppp%
 (#1)|#2|/#3`#4`#5`#6/<#7>[#8]%
 \let\Nodea\nodea\let\Nodeb\nodeb%
\let\Nodec\nodec\let\Noded\noded\Xpos=\xpos\Ypos=\ypos%
\deltaX=\deltax \deltaY=\deltay \isquare}
\def\osquarep(#1){\ifnextchar|{\osquarepp(#1)}{\osquarepp(#1)|alrb|}}%
\def\osquarepp(#1)|#2|{\ifnextchar/{\osquareppp(#1)|#2|}%
    {\osquareppp(#1)|#2|/>`>`>`>/}}%
\def\osquareppp(#1)|#2|/#3`#4`#5`#6/{%
    \ifnextchar<{\osquarepppp(#1)|#2|/#3`#4`#5`#6/}%
    {\osquarepppp(#1)|#2|/#3`#4`#5`#6/<1500,1500>}}%
\def\isquarepppp(#1)|#2|/#3`#4`#5`#6/<#7>[#8]{%
 \squarepppp(#1)|#2|/#3`#4`#5`#6/<#7>[#8]%
\ifnextchar|{\cubep}{\cubep|mmmm|}}%
\def\cubep|#1|{\ifnextchar/{\cubepp|#1|}{\cubepp|#1|/>`>`>`>/}}%
\def\isquare{\ifnextchar({\isquarep}{\isquarep(\default,\default)}}%
\def\isquarep(#1){\ifnextchar|{\isquarepp(#1)}{\isquarepp(#1)|alrb|}}
\def\isquarepp(#1)|#2|{\ifnextchar/{\isquareppp(#1)|#2|}%
    {\isquareppp(#1)|#2|/>`>`>`>/}}%
\def\isquareppp(#1)|#2|/#3`#4`#5`#6/{%
    \ifnextchar<{\isquarepppp(#1)|#2|/#3`#4`#5`#6/}%
    {\isquarepppp(#1)|#2|/#3`#4`#5`#6/<500,500>}}%
\def\cubepp|#1#2#3#4|/#5`#6`#7`#8/[#9]{%
\def\next[##1`##2`##3`##4]{\gdef\Labela{##1}%
\gdef\Labelb{##2}\gdef\Labelc{##3}\gdef\Labeld{##4}}\next[#9]%
\xend\xpos \yend\ypos
\Xend\xend\advance\Xend by -\Xpos
\Yend\yend\advance\Yend by -\Ypos
\domorphism(\Xpos,\Ypos)|#2|/#6/<\Xend,\Yend>[\Nodeb`\nodeb;\Labelb]%
\advance\Xpos by-\deltaX
\advance\xend by-\deltax
\Xend\xend\advance\Xend by -\Xpos
\domorphism(\Xpos,\Ypos)|#1|/#5/<\Xend,\Yend>[\Nodea`\nodea;\Labela]%
\advance\Ypos by-\deltaY
\advance\yend by-\deltay
\Yend\yend\advance\Yend by -\Ypos
\domorphism(\Xpos,\Ypos)|#3|/#7/<\Xend,\Yend>[\Nodec`\nodec;\Labelc]%
\advance\Xpos by\deltaX
\advance\xend by\deltax
\Xend\xend\advance\Xend by -\Xpos
\domorphism(\Xpos,\Ypos)|#4|/#8/<\Xend,\Yend>[\Noded`\noded;\Labeld]%
\ignorespaces}
\def\setwdth#1#2{\setbox0\hbox{$\labelstyle#1$}\wdth=\wd0
\setbox0\hbox{$\labelstyle#2$}\ifnum\wdth<\wd0 \wdth=\wd0 \fi}
\def\topppp/#1/<#2>^#3_#4{\allowbreak\mathrel{%
\ifnum#2=0
   \setwdth{#3}{#4}\deltax=\wdth \divide \deltax by \ul
   \advance \deltax by \defaultmargin  \ratchet{\deltax}{200}%
\else \deltax #2
\fi
\xy\ar@{#1}^{#3}_{#4}(\deltax,0) \endxy
\ignorespaces}}
\def\toppp/#1/<#2>^#3{\ifnextchar_{\topppp/#1/<#2>^{#3}}{\topppp/#1/<#2>^{#3}_{}}}
\def\topp/#1/<#2>{\ifnextchar^{\toppp/#1/<#2>}{\toppp/#1/<#2>^{}}}
\def\toop/#1/{\ifnextchar<{\topp/#1/}{\topp/#1/<0>}}
\def\twopppp/#1`#2/<#3>^#4_#5{\allowbreak\mathrel{%
\ifnum0=#3
  \setwdth{#4}{#5}\deltax=\wdth \divide \deltax by \ul \advance \deltax
  by \defaultmargin \ratchet{\deltax}{200}%
\else \deltax#3 \fi
\xy\ar@{#1}@<2.5pt>^{#4}(\deltax,0)%
\ar@{#2}@<-2.5pt>_{#5}(\deltax,0)\endxy\ignorespaces}}
\def\twoppp/#1`#2/<#3>^#4{\ifnextchar_{\twopppp/#1`#2/<#3>^{#4}}%
  {\twopppp/#1`#2/<#3>^{#4}_{}}}
\def\twopp/#1`#2/<#3>{\ifnextchar^{\twoppp/#1`#2/<#3>}{\twoppp/#1`#2/<#3>^{}}}
\def\twop/#1`#2/{\ifnextchar<{\twopp/#1`#2/}{\twopp/#1`#2/<0>}}
\def\threeppppp/#1`#2`#3/<#4>^#5|#6_#7{\allowbreak\mathrel{%
\ifnum0=#4
\setbox0\hbox{$\labelstyle#5$}\wdth=\wd0
\setbox0\hbox{$\labelstyle#6$}\ifnum\wdth<\wd0 \wdth=\wd0 \fi
\setbox0\hbox{$\labelstyle#7$}\ifnum\wdth<\wd0 \wdth=\wd0 \fi
\deltax=\wdth \divide \deltax by \ul \advance \deltax by
\defaultmargin \ratchet{\deltax}{300}%
\else\deltax#4 \fi
    \xy \ifnum\wd0=0 \ar@{#2}(\deltax,0)
    \else \ar@{#2}|{#6}(\deltax,0)\fi
\ar@{#1}@<4.5pt>^{#5}(\deltax,0)
\ar@{#3}@<-4.5pt>_{#7}(\deltax,0)\endxy\ignorespaces}}
\def\threepppp/#1`#2`#3/<#4>^#5|#6{\ifnextchar_{\threeppppp
  /#1`#2`#3/<#4>^{#5}|{#6}}{\threeppppp/#1`#2`#3/<#4>^{#5}|{#6}_{}}}
\def\threeppp/#1`#2`#3/<#4>^#5{\ifnextchar|{\threepppp
  /#1`#2`#3/<#4>^{#5}}{\threepppp/#1`#2`#3/<#4>^{#5}|{}}}
\def\threepp/#1`#2`#3/<#4>{\ifnextchar^{\threeppp/#1`#2`#3/<#4>}%
  {\threeppp/#1`#2`#3/<#4>^{}}}
\def\threep/#1`#2`#3/{\ifnextchar<{\threepp/#1`#2`#3/}%
  {\threepp/#1`#2`#3/<0>}}
\def\twoar(#1,#2){{%
 \scalefactor{0.1}
 \deltax#1\deltay#2%
 \deltaX=\ifnum\deltax<0-\fi\deltax
 \deltaY=\ifnum\deltay<0-\fi\deltay
 \Xend\deltax \multiply \Xend by \deltax
 \Yend\deltay \multiply \Yend by \deltay
 \advance\Xend by \Yend \multiply \Xend by 3
 \ifnum \deltaX > \deltaY
    \multiply \deltaX by 3 \advance \deltaX by \deltaY
 \else
    \multiply \deltaY by 3 \advance \deltaX by \deltaY
 \fi
 \multiply\deltax by 500
 \multiply\deltay by 500
 \xpos\deltax \multiply \xpos by 3 \divide\xpos by \deltaX
 \Xpos\deltax \multiply \Xpos by \deltaX \divide \Xpos by \Xend
 \advance \xpos by \Xpos
 \ypos\deltay \multiply \ypos by 3 \divide\ypos by \deltaX
 \Ypos\deltay \multiply \Ypos by \deltaX \divide \Ypos by \Xend
 \advance \ypos by \Ypos
 \xy \ar@{=>}(\xpos,\ypos) \endxy
}\ignorespaces}
\def\iiixiiipppppp(#1,#2)|#3|/#4/<#5>#6<#7>[#8;#9]{%
 \xpos#1\ypos#2\relax
 \def\next|##1##2##3##4##5##6##7|{\def\xa{##1}\def\xb{##2}%
 \def\xc{##3}\def\xd{##4}\def\xe{##5}\def\xf{##6}\nextt|##7|}%
 \def\nextt|##1##2##3##4##5##6|{\def\xg{##1}\def\xh{##2}%
 \def\xi{##3}\def\xj{##4}\def\xk{##5}\def\xl{##6}}%
 \next|#3|%
 \def\next<##1,##2>{\deltax##1\deltay##2}%
 \next<#5>%
 \def\next<##1,##2>{\deltaX##1\deltaY##2}%
 \next<#7>%
 \def\next##1{\topw##1\relax
 \ifodd\topw \def\zl{}\else\def\zl{\relax}\fi \divide\topw by 2
 \ifodd\topw \def\zk{}\else\def\zk{\relax}\fi \divide\topw by 2
 \ifodd\topw \def\zj{}\else\def\zj{\relax}\fi \divide\topw by 2
 \ifodd\topw \def\zi{}\else\def\zi{\relax}\fi \divide\topw by 2
 \ifodd\topw \def\zh{}\else\def\zh{\relax}\fi \divide\topw by 2
 \ifodd\topw \def\zg{}\else\def\zg{\relax}\fi \divide\topw by 2
 \ifodd\topw \def\zf{}\else\def\zf{\relax}\fi \divide\topw by 2
 \ifodd\topw \def\ze{}\else\def\ze{\relax}\fi \divide\topw by 2
 \ifodd\topw \def\zd{}\else\def\zd{\relax}\fi \divide\topw by 2
 \ifodd\topw \def\zc{}\else\def\zc{\relax}\fi \divide\topw by 2
 \ifodd\topw \def\zb{}\else\def\zb{\relax}\fi \divide\topw by 2
 \ifodd\topw \def\za{}\else\def\za{\relax}\fi}%
 \next{#6}%
 \def\next[##1`##2`##3`##4`##5`##6`##7`##8`##9]{%
 \def\nodea{##1}\def\nodeb{##2}\def\nodec{##3}%
 \def\noded{##4}\def\nodee{##5}\def\nodef{##6}%
 \def\nodeg{##7}\def\nodeh{##8}\def\nodei{##9}}%
 \next[#8]%
 \def\next[##1`##2`##3`##4`##5`##6`##7]{%
 \def\labela{##1}\def\labelb{##2}\def\labelc{##3}%
 \def\labeld{##4}\def\labele{##5}\def\labelf{##6}\nextt[##7]}%
 \def\nextt[##1`##2`##3`##4`##5`##6]{%
 \def\labelg{##1}\def\labelh{##2}\def\labeli{##3}%
 \def\labelj{##4}\def\labelk{##5}\def\labell{##6}}%
 \next[#9]%
 \def\next/##1`##2`##3`##4`##5`##6`##7/{%
\morphism(\xpos,\ypos)|\xe|/{##5}/<\deltax,0>[\nodeg`\nodeh;\labele]%
 \ifx\zi\empty\relax \morphism(\xpos,\ypos)||/<-/<-\deltaX,0>[\nodeg`0;]\fi
 \ifx\zd\empty\relax \morphism(\xpos,\ypos)||<0,-\deltaY>[\nodeg`0;]\fi
 \advance\xpos by \deltax
 \morphism(\xpos,\ypos)|\xf|/{##6}/<\deltax,0>[\nodeh`\nodei;\labelf]%
 \ifx\ze\empty\relax \morphism(\xpos,\ypos)||<0,-\deltaY>[\nodeh`0;]\fi
 \advance\xpos by \deltax
 \ifx\zf\empty\relax \morphism(\xpos,\ypos)||<0,-\deltaY>[\nodei`0;]\fi
 \ifx\zl\empty\relax \morphism(\xpos,\ypos)||<\deltaX,0>[\nodei`0;]\fi
 \advance\ypos by \deltay
 \ifx\zk\empty\relax \morphism(\xpos,\ypos)||<\deltaX,0>[\nodef`0;]\fi
 \advance\xpos by -\deltax
 \morphism(\xpos,\ypos)|\xd|/{##4}/<\deltax,0>[\nodee`\nodef;\labeld]%
 \advance\xpos by -\deltax
 \morphism(\xpos,\ypos)|\xc|/{##3}/<\deltax,0>[\noded`\nodee;\labelc]%
 \ifx\zh\empty\relax \morphism(\xpos,\ypos)||/<-/<-\deltaX,0>[\noded`0;]\fi
 \advance\ypos by \deltay
 \morphism(\xpos,\ypos)|\xa|/{##1}/<\deltax,0>[\nodea`\nodeb;\labela]%
 \ifx\zg\empty\relax \morphism(\xpos,\ypos)||/<-/<-\deltaX,0>[\nodea`0;]\fi
 \ifx\za\empty\relax \morphism(\xpos,\ypos)||/<-/<0,\deltaY>[\nodea`0;]\fi
 \advance\xpos by \deltax
 \morphism(\xpos,\ypos)|\xb|/{##2}/<\deltax,0>[\nodeb`\nodec;\labelb]%
 \ifx\zb\empty\relax \morphism(\xpos,\ypos)||/<-/<0,\deltaY>[\nodeb`0;]\fi
 \advance\xpos by \deltax
 \ifx\zc\empty\relax \morphism(\xpos,\ypos)||/<-/<0,\deltaY>[\nodec`0;]\fi
 \ifx\zj\empty\relax \morphism(\xpos,\ypos)||<\deltaX,0>[\nodec`0;]\fi
 \nextt/##7/}%
 \def\nextt/##1`##2`##3`##4`##5`##6/{%
 \morphism(\xpos,\ypos)|\xi|/{##3}/<0,-\deltay>[\nodec`\nodef;\labeli]%
 \advance\xpos by -\deltax
 \morphism(\xpos,\ypos)|\xh|/{##2}/<0,-\deltay>[\nodeb`\nodee;\labelh]%
 \advance\xpos by -\deltax
 \morphism(\xpos,\ypos)|\xg|/{##1}/<0,-\deltay>[\nodea`\noded;\labelg]%
 \advance\ypos by -\deltay
 \morphism(\xpos,\ypos)|\xj|/{##4}/<0,-\deltay>[\noded`\nodeg;\labelj]%
 \advance\xpos by \deltax
 \morphism(\xpos,\ypos)|\xk|/{##5}/<0,-\deltay>[\nodee`\nodeh;\labelk]%
 \advance\xpos by \deltax
 \morphism(\xpos,\ypos)|\xl|/{##6}/<0,-\deltay>[\nodef`\nodei;\labell]}%
 \next/#4/\ignorespaces}
\def\iiixiiip(#1){\ifnextchar|{\iiixiiipp(#1)}%
  {\iiixiiipp(#1)|aammbblmrlmr|}}%
\def\iiixiiipp(#1)|#2|{\ifnextchar/{\iiixiiippp(#1)|#2|}%
    {\iiixiiippp(#1)|#2|/>`>`>`>`>`>`>`>`>`>`>`>/}}%
\def\iiixiiippp(#1)|#2|/#3/{%
    \ifnextchar<{\iiixiiipppp(#1)|#2|/#3/}%
    {\iiixiiipppp(#1)|#2|/#3/<\default,\default>}}%
\def\iiixiiipppp(#1)|#2|/#3/<#4>{\ifnextchar[{\iiixiiippppp(#1)|#2|/#3/%
   <#4>0<0,0>}{\iiixiiippppp(#1)|#2|/#3/<#4>}}%
\def\iiixiiippppp(#1)|#2|/#3/<#4>#5{\ifnextchar<%
   {\iiixiiipppppp(#1)|#2|/#3/<#4>{#5}}%
   {\iiixiiipppppp(#1)|#2|/#3/<#4>{#5}<400,400>}}%
\def\iiixiipppppp(#1,#2)|#3|/#4/<#5>#6<#7>[#8;#9]{%
 \xpos#1\ypos#2\relax
 \def\next|##1##2##3##4##5##6##7|{\def\xa{##1}\def\xb{##2}%
 \def\xc{##3}\def\xd{##4}\def\xe{##5}\def\xf{##6}\def\xg{##7}}%
 \next|#3|%
 \def\next<##1,##2>{\deltax##1\deltay##2}%
 \next<#5>%
 \deltaX#7
 \topw#6
 \def\next{%
 \ifodd\topw \def\za{}\else\def\za{\relax}\fi \divide\topw by 2
 \ifodd\topw \def\zb{}\else\def\zb{\relax}\fi \divide\topw by 2
 \ifodd\topw \def\zc{}\else\def\zc{\relax}\fi \divide\topw by 2
 \ifodd\topw \def\zd{}\else\def\zd{\relax}\fi}%
 \next
 \def\next[##1`##2`##3`##4`##5`##6]{%
 \def\nodea{##1}\def\nodeb{##2}\def\nodec{##3}%
 \def\noded{##4}\def\nodee{##5}\def\nodef{##6}}%
 \next[#8]%
 \def\next[##1`##2`##3`##4`##5`##6`##7]{%
 \def\labela{##1}\def\labelb{##2}\def\labelc{##3}%
 \def\labeld{##4}\def\labele{##5}\def\labelf{##6}\def\labelg{##7}}%
 \next[#9]%
 \def\next/##1`##2`##3`##4`##5`##6`##7/{%
 \ifx\zc\empty\relax\morphism(\xpos,\ypos)<\deltaX,0>[0`\noded;]\fi
 \advance\xpos by\deltaX
 \morphism(\xpos,\ypos)|\xc|/##3/<\deltax,0>[\noded`\nodee;\labelc]%
 \advance\xpos by \deltax
 \morphism(\xpos,\ypos)|\xd|/##4/<\deltax,0>[\nodee`\nodef;\labeld]%
 \advance\xpos by \deltax
 \ifx\zd\empty\relax  \morphism(\xpos,\ypos)<\deltaX,0>[\nodef`0;]\fi
 \advance\xpos by -\deltaX  \advance\xpos by -\deltax
 \advance\xpos by -\deltax  \advance\ypos by \deltay
 \ifx\za\empty\relax\morphism(\xpos,\ypos)<\deltaX,0>[0`\nodea;]\fi
 \advance\xpos by\deltaX
 \morphism(\xpos,\ypos)|\xa|/##1/<\deltax,0>[\nodea`\nodeb;\labela]%
 \morphism(\xpos,\ypos)|\xe|/##5/<0,-\deltay>[\nodea`\noded;\labele]%
 \advance\xpos by \deltax
 \morphism(\xpos,\ypos)|\xb|/##2/<\deltax,0>[\nodeb`\nodec;\labelb]%
 \morphism(\xpos,\ypos)|\xf|/##6/<0,-\deltay>[\nodeb`\nodee;\labelf]%
 \advance\xpos by \deltax
 \morphism(\xpos,\ypos)|\xg|/##7/<0,-\deltay>[\nodec`\nodef;\labelg]%
 \ifx\zb\empty\relax \morphism(\xpos,\ypos)<\deltaX,0>[\nodec`0;]\fi}%
 \next/#4/\ignorespaces}
\def\iiixiip(#1){\ifnextchar|{\iiixiipp(#1)}%
  {\iiixiipp(#1)|aabblmr|}}%
\def\iiixiipp(#1)|#2|{\ifnextchar/{\iiixiippp(#1)|#2|}%
    {\iiixiippp(#1)|#2|/>`>`>`>`>`>`>/}}%
\def\iiixiippp(#1)|#2|/#3/{%
    \ifnextchar<{\iiixiipppp(#1)|#2|/#3/}%
    {\iiixiipppp(#1)|#2|/#3/<\default,\default>}}%
\def\iiixiipppp(#1)|#2|/#3/<#4>{\ifnextchar[{\iiixiippppp(#1)|#2|/#3/%
   <#4>{0}<0>}{\iiixiippppp(#1)|#2|/#3/<#4>}}%
\def\iiixiippppp(#1)|#2|/#3/<#4>#5{\ifnextchar<%
   {\iiixiipppppp(#1)|#2|/#3/<#4>{#5}}%
   {\iiixiipppppp(#1)|#2|/#3/<#4>{#5}<0>}}%
\def\node#1(#2,#3)[#4]{%
\expandafter\gdef\csname x#1\endcsname{#2}%
\expandafter\gdef\csname y#1\endcsname{#3}%
\expandafter\gdef\csname ob#1\endcsname{#4}%
}
\def\arrowp|#1|{\ifnextchar/{\arrowpp|#1|}{\arrowpp|#1|/>/}}
\def\arrowpp|#1|/#2/[#3`#4;#5]{%
\xfinish=\csname x#4\endcsname
\yfinish=\csname y#4\endcsname
\advance\xfinish by -\csname x#3\endcsname
\advance\yfinish by -\csname y#3\endcsname
\morphism(\csname x#3\endcsname,\csname y#3\endcsname)|#1|/#2/%
<\xfinish,\yfinish>[\csname ob#3\endcsname`\csname ob#4\endcsname;#5]%
}
\newtheoremstyle{fact}
     {\topsep}
     {\topsep}
     {\slshape}
     {}
     {\bfseries}
     {}
     { }
     {\thmname{#1}\thmnumber{ #2.}\thmnote{ \rm (#3)}}
\newtheorem{theorem}{Theorem}[section]
\newtheorem{Ltheorem}{Theorem}
\newtheorem*{theorem*}{Theorem}
\newtheorem{lemma}[theorem]{Lemma}
\newtheorem{proposition}[theorem]{Proposition}
\newtheorem{corollary}[theorem]{Corollary}
\newtheorem{problem}{Problem}
\newtheorem*{problem*}{Problem}
\theoremstyle{definition}
\newtheorem{definition}[theorem]{Definition}
\newtheorem{remark}[theorem]{Remark}
\newtheorem*{remark*}{Remark}
\newtheorem{discussion}[theorem]{Discussion}
\newtheorem*{question*}{Question}
\newtheorem*{examples*}{Examples}
\newtheorem*{example*}{Example}
\newtheorem*{convention*}{Convention}
\theoremstyle{fact}
\newtheorem{ftheorem}[theorem]{Theorem}
\newtheorem{flemma}[theorem]{Lemma}
\newenvironment{myromanlist}[1][enumi]{\begin{list}{{\rm (\roman{#1})}}
{\usecounter{#1}\setlength{\labelwidth}{25pt}\setlength{\topsep}{-6pt}
\setlength{\itemsep}{-4pt} \setlength{\leftmargin}{25pt}}}{\end{list}}
\newenvironment{myalphlist}[1][enumi]{\begin{list}{{\rm (\alph{#1})}}
{\usecounter{#1}\setlength{\labelwidth}{25pt}\setlength{\topsep}{-6pt}
\setlength{\itemsep}{-4pt} \setlength{\leftmargin}{25pt}}}{\end{list}}
\def\proofont{\fontseries{bx}\fontshape{sc}\selectfont}
\def\proofname{Proof.}
\newcommand{\Note}[1]{}
\renewenvironment{proof}[1][\proofname]{\par
  \normalfont
  \topsep6\p@\@plus6\p@ \trivlist
  \item[\hskip\labelsep\noindent\proofont #1]\ignorespaces
}{%
  \qed\endtrivlist
}
\titleformat*{\section}{\normalsize\bfseries\centering}
\titleformat*{\subsection}{\normalsize\bfseries}
\titlespacing{\subsection}{0pt}{\topsep}{0.5ex}
\titleformat{\subsection}[runin]{\normalfont\bfseries}{%
\thesubsection.}{0.5ex}{}[.]
\author{A. P. Nguyen\thanks{I am grateful for the financial support of
the Faculty of Science at the University of Manitoba that enabled me to
carry out this research.}}
\title{Which infinite abelian groups admit an almost\\
maximally almost-periodic group topology?
\thanks{{\em 2000 Mathematics Subject
Classification}: Primary 22A05; Secondary 20K45, 54H11.\endgraf
\hspace{5.5pt}
{\em Keywords:} almost maximally almost-periodic, Pr\"ufer group,
von Neumann radical, $T$-sequence.}}
\begin{document}

\makeatletter
\def\@fnsymbol#1{\ifcase#1\or * \or 1 \or 2  \else\@ctrerr\fi\relax}

\let\mytitle\@title
\chead{\small\itshape A. P. Nguyen / Almost maximally almost-periodic
group topologies}
\fancyhead[RO,LE]{\small \thepage}
\makeatother

\maketitle

\def\thanks#1{}

\thispagestyle{empty}

\begin{abstract}
A topological group $G$ is said to be {\em almost maximally
almost-periodic} if its von Neumann radical $\mathbf{n}(G)$ is non-trivial,
but finite. In this paper, we prove that every abelian group with an
infinite torsion subgroup admits a (Hausdorff) almost maximally
almost-periodic group topology.
Some open problems are also formulated.
\end{abstract}

\section{Introduction}

\label{sect:intro}

Every topological group $G$
admits a ``largest" compact Hausdorff group $bG$ and a continuous
homomorphism $\rho_G \colon G \rightarrow bG$ such that
every continuous homomorphism $\varphi\colon G\rightarrow K$
into a compact Hausdorff group $K$ factors uniquely through
$\rho_G$:
\begin{align}
\bfig
\Vtriangle(0,0)/->`->`<--/<300,350>%
[G`K`bG;\varphi`\rho_G`\exists!\tilde\varphi]
\efig
\end{align}
The group $bG$ is called the {\em Bohr-compactification} of $G$, and
the image $\rho_G(G)$ is dense in $bG$. The kernel of
$\rho_G$ is called the {\em von Neumann radical} of $G$, and
is denoted by $\mathbf{n}(G)$. One says that $G$ is
{\em maximally almost-periodic} if $\mathbf{n}(G)=1$, and
{\em minimally almost-periodic} if $\mathbf{n}(G)=G$ (cf.~\cite{NeuWig}).

It is well known that the discrete topology is maximally almost-periodic
on every abelian group (cf.~\cite[4.23]{HewRos}).
Ajtai, Havas, and Koml\'os, and independently, Zelenyuk and Protasov,
showed that every infinite abelian group admits a (Hausdorff) group
topology that  is not maximally almost-periodic (cf.~\cite{AHK}
and~\cite[Theorem~16]{ZelProt}). While these results provide a group
topology where the von Neumann radical is non-trivial, they
remain silent about the size of the von Neumann radical of the group. In
particular, they do not guarantee that the von Neumann radical is finite.
Motivated by these observations,
Luk\'acs called a Hausdorff topological group $G$  {\em almost maximally
almost-periodic} if $\mathbf{n}(G)$ is non-trivial, but finite
(cf.~\cite{GL9}). He proved, among other results,  that for every prime
$p\neq 2$, the Pr\"ufer group $\mathbb{Z}(p^\infty)$ admits a
(Hausdorff) almost maximally almost-periodic group topology
(cf.~\cite[4.4]{GL9}).

The aim of this paper is to substantially extend the results of Luk\'acs
in several directions. The main results of the paper are as follows:

\begin{Ltheorem} \label{main:ctbl}
Let $A$ be an abelian  group with an infinite torsion subgroup. Then
$A$ admits a (Hausdorff) almost maximally almost-periodic group topology.
\end{Ltheorem}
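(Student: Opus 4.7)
Because the von Neumann radical commutes with direct products of Hausdorff topological groups, $\mathbf{n}(B \oplus C, \tau \oplus \sigma) = \mathbf{n}(B,\tau) \oplus \mathbf{n}(C,\sigma)$, combining an almost maximally almost-periodic topology on a summand $B \leq A$ with the discrete topology on a complementary summand $C$ (discrete is maximally almost-periodic on every abelian group by \cite[4.23]{HewRos}) yields an almost maximally almost-periodic Hausdorff topology on $A$. Thus it suffices to find a direct summand of $A$ that admits such a topology. Decomposing $t(A) = \bigoplus_p t_p(A)$, the infinitude of $t(A)$ implies that at least one of the following summands exists in $A$: (i) $\mathbb{Z}(p^\infty)$ for some odd prime $p$ (splitting off a primary divisible subgroup); (ii) $\mathbb{Z}(2^\infty)$; (iii) $\bigoplus_n \mathbb{Z}(p^{k_n})$ with $k_n \to \infty$, obtained from a basic subgroup of an unbounded reduced primary component via Kulikov's theorem; or (iv) $\bigoplus_n \mathbb{Z}(m_n)$ with cyclic prime-power orders $m_n$, when every $t_p(A)$ is bounded. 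Case (i) is exactly Lukács's result \cite[4.4]{GL9}. Cases (ii)--(iv) each reduce to building an almost maximally almost-periodic topology on a group of the form $B = \bigoplus_n \langle e_n\rangle$.

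\textbf{Construction on $B$ via $T$-sequences.} On such a $B$ I would construct a $T$-sequence (in the sense of Protasov-Zelenyuk) of the form $a_k = \sum_n c_{k,n} e_n$, with sparsely supported integer coefficients $c_{k,n}$ chosen to satisfy arithmetic constraints tailored to the case. The Protasov-Zelenyuk theory then produces a finest Hausdorff group topology $\tau$ on $B$ in which $a_k \to 0$, and its continuous characters $\chi \colon (B,\tau) \to \mathbb{T}$ are precisely those with $\chi(a_k) \to 1$; consequently $\mathbf{n}(B,\tau) = \bigcap_\chi \ker \chi$. The coefficients $c_{k,n}$ are to be designed so that (a) the convergence condition forces $\chi(g_0) = 1$ for one fixed non-zero element $g_0 \in B$, and (b) for every element $b$ outside a prescribed finite subgroup $F \ni g_0$, one can still produce a continuous character $\chi_b$ with $\chi_b(b) \neq 1$.

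\textbf{Main obstacle.} Achieving (a) is routine: it amounts to encoding a single linear relation into the coefficients $c_{k,n}$, which guarantees that $\mathbf{n}(B,\tau)$ is non-trivial. The hard step is (b), namely keeping $\mathbf{n}(B,\tau)$ \emph{finite}. For each $b \notin F$ one must construct a character $\chi_b$ coordinate-by-coordinate on the generators $e_n$, simultaneously matching prescribed values on a finite initial segment (so that $\chi_b(b) \neq 1$) and maintaining the convergence $\prod_n \chi_b(e_n)^{c_{k,n}} \to 1$; verifying that the chosen $(a_k)$ is actually a $T$-sequence, so that a Hausdorff topology of the desired form exists in the first place, is interleaved with this same combinatorial work. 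This simultaneous control is the technical heart of the argument, and is where the constructions for cases (ii), (iii), and (iv) will differ in detail.
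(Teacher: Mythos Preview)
Your reduction step has a genuine gap: the subgroups you list in (iii) and (iv) need not be direct summands of $A$, so the product-topology trick $\mathbf{n}(B\oplus C,\tau\oplus\sigma)=\mathbf{n}(B,\tau)\oplus\mathbf{n}(C,\sigma)$ does not apply. In case (iii), a basic subgroup furnished by Kulikov's theorem is only \emph{pure} in the primary component, not a summand (it is a summand precisely when the reduced $p$-group is already a direct sum of cyclics); and even if it were, $t_p(A)$ itself need not split off from $A$. In case (iv) the failure is decisive: take $A=\prod_p\mathbb{Z}(p)$, whose torsion subgroup $\bigoplus_p\mathbb{Z}(p)$ is infinite. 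Here every primary component is bounded (indeed cyclic of prime order), so you are squarely in (iv); yet for every infinite set $S$ of primes the subgroup $\bigoplus_{p\in S}\mathbb{Z}(p)$ is \emph{not} a direct summand of $A$. (Any complement $C$ would satisfy $C[p]=0$ and $C=pC$ for each $p\in S$, forcing $C\subseteq\prod_{p\notin S}\mathbb{Z}(p)$, whence $B\oplus C$ misses, e.g., the all-ones vector.) Thus no summand of the form you require exists, and your argument does not produce a topology on $A$.

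The paper avoids this obstacle entirely by replacing ``direct summand'' with ``subgroup'': if a subgroup $A\leq B$ carries an almost maximally almost-periodic topology $\tau_A$, one declares the $\tau_A$-neighbourhoods of $0$ to be a base at $0$ in $B$, making $A$ an \emph{open} subgroup. A short computation using divisibility of $\mathbb{T}$ and discreteness of $B/A$ then gives $\mathbf{n}(B,\tau_B)=\mathbf{n}(A,\tau_A)$. With this closure-under-supergroups lemma in hand, Lemma~\ref{lem:infinite} reduces everything to the two model cases $\mathbb{Z}(p^\infty)$ and an infinite direct sum of finite cyclic groups, and both of these are already covered by Theorem~\ref{main:p2} and \cite[3.1]{GL9} respectively --- so no new $T$-sequence constructions are needed beyond the $p=2$ case of Theorem~\ref{main:p2}.
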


\begin{Ltheorem} \label{main:p2}
Let $p$ be a prime, and
$x \in \mathbb{Z}(p^\infty)$ a non-zero element. Then there is a
(Hausdorff) group topology $\tau$ on $\mathbb{Z}(p^\infty)$ such that
$\mathbf{n}(\mathbb{Z}(p^\infty),\tau)=\langle x\rangle$.
\end{Ltheorem}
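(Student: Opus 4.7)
The plan is to realize the desired topology $\tau$ as the one induced by a carefully chosen $T$-sequence on $\mathbb{Z}(p^\infty)$. I begin by identifying the (discrete) Pontryagin dual $\widehat{\mathbb{Z}(p^\infty)}$ with the $p$-adic integers $\mathbb{Z}_p$ through $\alpha\mapsto\chi_\alpha$, where $\chi_\alpha(m/p^n)=\exp(2\pi i(\alpha\bmod p^n)m/p^n)$. If $x$ has order $p^k$, then the annihilator of $\langle x\rangle$ inside $\mathbb{Z}_p$ is exactly $p^k\mathbb{Z}_p$, so it suffices to produce a Hausdorff group topology $\tau$ on $\mathbb{Z}(p^\infty)$ whose set of continuous characters is precisely $\{\chi_\alpha : \alpha\in p^k\mathbb{Z}_p\}$, for then
\[
\mathbf{n}(\mathbb{Z}(p^\infty),\tau)=\bigcap_{\alpha\in p^k\mathbb{Z}_p}\ker\chi_\alpha=\langle x\rangle.
\]

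Next I invoke the Zelenyuk--Protasov theory of $T$-sequences (cf.~\cite{ZelProt}): a sequence $(a_n)$ in a countable abelian group is called a $T$-sequence if some Hausdorff group topology makes $a_n\to 0$, in which case there is a finest such topology, whose continuous characters are exactly those $\chi$ satisfying $\chi(a_n)\to 1$. Consequently, the task reduces to exhibiting a $T$-sequence $(a_n)\subset\mathbb{Z}(p^\infty)$ with
\[
\chi_\alpha(a_n)\to 1\iff \alpha\in p^k\mathbb{Z}_p.
\]

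To engineer such a sequence I would write $a_n=x+b_n$, so that $\chi_\alpha(a_n)=\chi_\alpha(x)\chi_\alpha(b_n)$. For $\alpha\in p^k\mathbb{Z}_p$ the factor $\chi_\alpha(x)$ is trivial, so one needs $\chi_\alpha(b_n)\to 1$; for $\alpha\notin p^k\mathbb{Z}_p$ the factor $\chi_\alpha(x)$ is a fixed non-trivial $p^k$-th root of unity and one needs $\chi_\alpha(b_n)$ to stay away from $\overline{\chi_\alpha(x)}$. A natural strategy is to take $b_n$ supported at rapidly increasing levels of the Pr\"ufer group, with $p$-adic digits chosen so that (i) for every $\alpha$ divisible by $p^k$ the character values decay to $1$, and (ii) for every $\alpha$ not divisible by $p^k$ the digit sequence of $\chi_\alpha(b_n)$ is forced to oscillate away from the forbidden limit. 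The odd-prime case refines the construction used by Luk\'acs in \cite[4.4]{GL9}. One then checks, using a Zelenyuk--Protasov sufficient criterion, that the chosen $(a_n)$ is actually a $T$-sequence, by verifying that every non-trivial finite signed sum $\sum_i\epsilon_ia_{n_i}$ ($\epsilon_i\in\{\pm 1\}$, with distinct indices) eventually escapes a prescribed basic zero-neighbourhood.

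The main obstacle I anticipate is the prime $p=2$: the $2$-adic arithmetic of roots of unity is markedly less amenable than for odd primes, and ensuring non-convergence of $\chi_\alpha(b_n)$ simultaneously for \emph{every} $\alpha\in\mathbb{Z}_p\setminus p^k\mathbb{Z}_p$, without breaking the $T$-sequence property, requires a delicate diagonal choice of the $b_n$ that was not handled by Luk\'acs' original argument.
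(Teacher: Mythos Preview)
Your overall framework---$T$-sequences, computing $\mathbf{n}$ as the common kernel of the continuous characters, and writing $a_n=x+b_n$---matches the paper. The gap is in the intermediate target you set: asking that the dual of $(\mathbb{Z}(p^\infty),\tau)$ be \emph{all} of $p^k\mathbb{Z}_p$ cannot be achieved by a non-trivial $T$-sequence. Since $\chi_{p^k\beta}(y)=\chi_\beta(p^ky)$, requiring $\chi_\alpha(a_n)\to 1$ for every $\alpha\in p^k\mathbb{Z}_p$ is the same as requiring $p^ka_n\to 0$ for \emph{every} character of $\mathbb{Z}(p^\infty)$, i.e.\ in the Bohr topology of the discrete group. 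A discrete abelian group has no non-trivial Bohr-convergent sequences, so $p^ka_n=0$ eventually; then $a_n$ lies in the finite subgroup $\mathbb{Z}(p^\infty)[p^k]$, and a sequence with finite range can be a $T$-sequence only if it is eventually $0$. The companion requirement (ii)---forcing $\chi_\alpha(b_n)\not\to\overline{\chi_\alpha(x)}$ simultaneously for the uncountably many $\alpha\notin p^k\mathbb{Z}_p$ by a ``diagonal choice'' of a single sequence---runs into the same obstruction from the other side.

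The paper sidesteps this by aiming for a much smaller dual. It interleaves ``payload'' terms $b_k$ (each of the form $-x+\sum_j e_{2n_j}$ with rapidly spaced $n_j$) with the generators $e_k$, forming the sequence $d_k=b_1,e_1,b_2,e_2,\ldots$. By Theorem~\ref{thm:prel:ZP}(c) the condition $\chi(e_k)\to 0$ already forces $\chi=m\chi_1$ with $m\in\mathbb{Z}$, so only countably many characters can survive; one then merely needs $\chi_1(b_k)\to -x$ in $\mathbb{T}$, which the explicit $b_k$ visibly satisfy, so that $m\chi_1(d_k)\to 0$ iff $mx=0$ iff $p^k\mid m$. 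The resulting dual is the cyclic group $\langle p^k\chi_1\rangle\cong p^k\mathbb{Z}$, whose common kernel is still exactly $\langle x\rangle$. The genuine labour---and the reason $p=2$ requires separate treatment---is proving that the interleaved $\{d_k\}$ is a $T$-sequence: the paper does this not via the raw Zelenyuk--Protasov signed-sum test you mention, but via Luk\'acs' criterion for almost torsion-free groups (Theorem~\ref{thm:Lukacs}) combined with a bespoke canonical form for elements of $\mathbb{Z}(2^\infty)$ (Section~\ref{sect:canon}) and the order estimate of Corollary~\ref{cor:canon:y-z}.
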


Most of the effort in this paper is put toward proving
Theorem~\ref{main:p2}, which implies Theorem~\ref{main:ctbl}. Once
Theorem~\ref{main:p2} has been established, Theorem~\ref{main:ctbl}
follows from it and from another result of Luk\'acs (cf.~\cite[3.1]{GL9}).
Since Theorem~\ref{main:p2} was proven by Luk\'acs for all primes $p >2$
(cf.~\cite[4.4]{GL9}), it remains to be shown that the statement also
holds for $p=2$.

The paper is structured as follows: In order to make the manuscript more
self-contained, in Section~\ref{sect:prelim}, we have collected some
preliminary results and techniques that will be used throughout the paper.
Section~\ref{sect:canon} is a somewhat technical preparation for the proof
of Theorem~\ref{main:p2}, which is presented in Section~\ref{sect:p2}
along with the proof of Theorem~\ref{main:ctbl}. Finally, in
Section~\ref{sect:open}, we formulate two open problems stemming from the
results presented in this paper, and discuss what is known to us, at this
point, about their solution.

\section{Preliminaries}

\label{sect:prelim}

In this section, we have collected some preliminary results and
techniques that are  used throughout the paper. Thus, the experienced
or expert reader may wish to skip this section.

In this paper, all groups are abelian, and all group topologies are
Hausdorff, unless otherwise stated.
For a topological group $A$, let $\hat A =
\mathscr{H}(A,\mathbb{T})$ denote the Pontryagin dual of $A$---in
other words, the group of {\em continuous characters} of $A$ (i.e.,
continuous homomorphisms $\chi\colon A\rightarrow\mathbb{T}$, where
$\mathbb{T}=\mathbb{R}/\mathbb{Z}$) equipped with the compact-open
topology. It follows from the famous Peter-Weyl Theorem
(\cite[Thm.~33]{Pontr}) that the Bohr-compactification of $A$ can be
quite easily computed: $bA = \widehat{\hat A_d}$, where $\hat A_d$
stands for the group $\hat A$ with the discrete topology. Thus,
\begin{align} \label{eq:nA:ker}
\mathbf{n}(A)=\bigcap\limits_{\chi\in\hat A} \ker \chi.
\end{align}

The group $\mathbb{Z}(p^\infty)$
can be seen as the subgroup of $\mathbb{Q}/\mathbb{Z}$ generated by
elements of $p$-power order, or as the group formed by all $p^n$-th roots
of unity in $\mathbb{C}$.
Throughout this note, the additive notation
provided by $\mathbb{Q}/\mathbb{Z}$ is used, and we set
\mbox{$e_n= \frac 1{p^n}+\mathbb{Z}$}. The Pontryagin dual
$\widehat{\mathbb{Z}(p^\infty)}$  of $\mathbb{Z}(p^\infty)$ is the
$p$-adic group $\mathbb{Z}_p$. We let $\chi_1$  denote the
natural embedding of $\mathbb{Z}(p^\infty)$ into $\mathbb{T}$.
Luk\'acs, who proved Theorem~\ref{main:p2} for \mbox{$p\neq 2$}
(cf.~\cite[4.4]{GL9}), used so-called $T$-sequences as his main machinery
to produce almost maximally almost-periodic group topologies on
$\mathbb{Z}(p^\infty)$. While the outstanding case of \mbox{$p=2$} requires
special attention, the techniques used in this paper are nevertheless
similar.

A sequence $\{a_n\}$ in a group~$G$ is a {\em $T$-sequence} if there
is a Hausdorff group topology $\tau$ on $G$ such that  $a_n\stackrel
\tau \longrightarrow e$. In this case, the group $G$ equipped with
the finest group topology with this property is denoted by
$G\{a_n\}$. The notion of a $T$-sequence was introduced and
extensively investigated by Zelenyuk and Protasov, who characterized
$T$-sequences (and so-called $T$-filters), and studied the
topological  properties of $G\{a_n\}$, where $\{a_n\}$ is a
$T$-sequence (cf.~\cite[Theorems 1-2]{ZelProt} and~\cite[2.1.3,
2.1.4, 3.1.4]{PZMono}). These two authors used the technique of
$T$-sequences to prove the following results (some of which were also
obtained independently by Ajtai, Havas, and Koml\'os~\cite{AHK}).

\begin{ftheorem} \label{thm:prel:ZP} \mbox{ }
\begin{myalphlist}

\item
{\rm ({\cite[\S 2]{AHK}}, {\cite[Example~4]{ZelProt}})}
$\mathbb{Z}$ admits a minimally almost-periodic group
topology.

\item
{\rm (\cite[\S 4]{AHK}, \cite[Example~6]{ZelProt})}
$\mathbb{Z}(p^\infty)$ admits a minimally almost-periodic group
topology for every prime $p$.

\item
{\rm (\cite[Example~6]{ZelProt}, \cite[3.3]{DikMilTon})}
Let $\chi\in \widehat{\mathbb{Z}(p^\infty)} = \mathbb{Z}_p$.
One has
$\chi(e_n) \longrightarrow 0$ if and only if there is $m \in \mathbb{Z}$
such that $\chi = m \chi_1$.

\end{myalphlist}
\end{ftheorem}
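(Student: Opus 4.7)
My plan is to handle the three assertions separately, since (a) and (b) rely on constructing an appropriate $T$-sequence, while (c) is a direct calculation in the $p$-adic integers.

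For (c), I would use the $p$-adic expansion. Identify $\chi \in \mathbb{Z}_p$ with the compatible sequence $m_n := \chi \bmod p^n$ normalised so that $0 \leq m_n < p^n$; then $\chi(e_n) = m_n/p^n + \mathbb{Z} \in \mathbb{T}$, and $m_{n+1} = m_n + k_n p^n$ with $k_n \in \{0,1,\ldots,p-1\}$. The ``if'' direction is immediate: for $\chi = m\chi_1$ one has $(m\chi_1)(e_n) = m/p^n + \mathbb{Z}$, and $|m|/p^n \to 0$ shows convergence to $0$ in $\mathbb{T}$. For the ``only if'' direction, $\chi(e_n) \to 0$ in $\mathbb{T}$ means $\min(m_n/p^n,\, 1-m_n/p^n) \to 0$. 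From the recursion $m_{n+1}/p^{n+1} = m_n/p^{n+1} + k_n/p$ one deduces that the sequence must eventually fall into exactly one of two regimes: either $k_n = 0$ for all large $n$ (so the $m_n$ stabilise and $\chi$ is a non-negative integer), or $k_n = p-1$ for all large $n$ (so $\chi$ is the $p$-adic representation of a negative integer, since $\sum_{k \geq N}(p-1)p^k = -p^N$ in $\mathbb{Z}_p$). In either case $\chi \in \mathbb{Z} \subset \mathbb{Z}_p$, and hence $\chi = m\chi_1$.

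For (a) and (b), the strategy is dictated by the $T$-sequence framework: a character $\chi$ is continuous on $G\{a_n\}$ precisely when $\chi(a_n) \to 0$ in $\mathbb{T}$, so by the description of $\mathbf{n}(G)$ in \eqref{eq:nA:ker}, minimal almost-periodicity is equivalent to the statement that every non-trivial character fails that convergence. For (a), I would construct inductively a rapidly growing integer sequence $\{a_n\}$ such that for every $\alpha \in \mathbb{T} \setminus \{0\}$ one has $a_n \alpha \not\to 0 \bmod \mathbb{Z}$; a standard diophantine argument---choosing each $a_{n+1}$ to be a large multiple of $a_n$ that dodges both the rational $\alpha$'s of small denominator and a countable dense set of irrational targets---does the job. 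For (b), the analogous construction picks a sparse sequence $\{a_n\} \subseteq \mathbb{Z}(p^\infty)$ satisfying the $T$-sequence criterion of Protasov--Zelenyuk while killing every non-zero $\chi \in \mathbb{Z}_p$; here part (c) plays a bookkeeping role, since it identifies exactly which $\chi \in \mathbb{Z}_p$ already satisfy convergence along the natural generators $\{e_n\}$, and one must then further perturb the sequence to eliminate those $\chi \in \mathbb{Z}$ as well.

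The main obstacle in (a) and (b) is verifying that the candidate sequences actually are $T$-sequences, i.e., that the finest group topology in which $a_n \to e$ is Hausdorff. This is the combinatorial Protasov--Zelenyuk condition on sums of blocks of the sequence, and it has to be checked hand-in-hand with the diophantine conditions that kill characters---the two requirements pull in opposite directions (fast growth helps kill characters; controlled arithmetic helps the $T$-sequence criterion), and the bulk of the work is in balancing them. Once the $T$-sequence property is in place, the character-killing step is the easier half, and (c) reduces to the clean $p$-adic computation above.
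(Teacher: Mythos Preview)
The paper does not prove this theorem at all: it is stated as a ``fact'' theorem with external citations (Ajtai--Havas--Koml\'os, Zelenyuk--Protasov, Dikranjan--Milan--Tonolo) and no argument is supplied in the text. So there is no paper proof against which to compare your attempt; your proposal stands as an independent sketch of results the paper merely quotes.

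On its own merits: your treatment of (c) is correct and essentially complete---the dichotomy on the $p$-adic digits being eventually $0$ or eventually $p-1$ is exactly the computation carried out in the cited references. For (a) and (b) you have correctly identified the architecture (build a $T$-sequence on which every non-trivial character fails to converge) and the genuine difficulty (simultaneously verifying the Zelenyuk--Protasov Hausdorff criterion and the character-killing condition), but you have not actually produced a sequence or verified either property; ``a standard diophantine argument'' and ``dodges \ldots a countable dense set of irrational targets'' are placeholders, not arguments. One small caution: your heuristic that ``fast growth helps kill characters; controlled arithmetic helps the $T$-sequence criterion'' is closer to backwards in $\mathbb{Z}$---lacunarity ($a_{n+1}/a_n \to \infty$) is the classical \emph{sufficient} condition for being a $T$-sequence, whereas for any prescribed fast-growing $\{a_n\}$ one can cook up an irrational $\alpha$ with $a_n\alpha \to 0$, so killing characters is where the delicate arithmetic enters. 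If you want to turn the sketch into a proof, consult the explicit constructions in the cited sources; they are not long, but they are not generic either.
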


Since $\mathbb{Z}(2^\infty)$ is an abelian group, we need only the abelian
version of the Zelenyuk-Protasov criterion:

\begin{ftheorem}[{\cite[2.1.4]{PZMono}, \cite[Theorem~2]{ZelProt}}]
\label{thm:ZelProt}
Let $\underline a = \{a_k\}$ be a sequence in an abelian group $A$.
For $l,m\in \mathbb{N}$, put
\begin{align}
A(l,m)_{\underline a}=\{ m_1 a_{k_1} + \cdots + m_h a_{k_h} \mid
m\leq k_1<\cdots <k_h,m_i\in\mathbb{Z}\backslash\{0\},\sum |m_i| \leq l\}.
\end{align}
Then  $\{a_k\}$ is a $T$-sequence if and  only if for every
$l\in\mathbb{N}$ and $g\neq 0$, there exists $m \in \mathbb{N}$ such that
$g \not\in A(l,m)_{\underline a}$.
\end{ftheorem}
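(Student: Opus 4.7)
The plan is to establish the two implications separately. Necessity is a direct consequence of the topological group axioms, while sufficiency requires an explicit construction of the witnessing topology and is where the substance of the argument lies.

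For necessity, I would fix $l \in \mathbb{N}$ and $g \neq 0$, and let $\tau$ be a Hausdorff group topology on $A$ with $a_k \xrightarrow{\tau} 0$. Using Hausdorffness, pick a $\tau$-open neighborhood $U$ of $0$ with $g \notin U$, and by iterated continuity of addition together with continuity of inversion find a symmetric open neighborhood $V$ of $0$ whose $l$-fold sum lies in $U$. Next, choose $m$ so that $a_k \in V$ for all $k \geq m$. Any element of $A(l,m)_{\underline a}$, being a sum $m_1 a_{k_1} + \cdots + m_h a_{k_h}$ with $\sum|m_i| \leq l$ and each $k_i \geq m$, can be expanded as a sum of at most $l$ terms drawn from $V = -V$; it therefore lies in the $l$-fold sum of $V$ and differs from $g$.

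For sufficiency, the task is to manufacture a Hausdorff group topology $\tau$ on $A$ in which $a_k \to 0$. The natural candidate is to declare, for each non-decreasing $\varphi\colon \mathbb{N}\rightarrow\mathbb{N}$ with $\varphi(l)\to\infty$, the set
\[
U_\varphi = \{0\} \cup \bigcup_{l \in \mathbb{N}} A(l, \varphi(l))_{\underline a}
\]
a basic neighborhood of $0$, and to take the translates $a + U_\varphi$ as a neighborhood base at $a$. Three ingredients are straightforward: each $U_\varphi$ is symmetric under negation; Hausdorffness follows by using the hypothesis to construct, for any $g \neq 0$, a function $\varphi$ with $g \notin A(l,\varphi(l))_{\underline a}$ for every $l$; and $a_k \to 0$ because $a_k \in A(1,\varphi(1))_{\underline a} \subseteq U_\varphi$ whenever $k \geq \varphi(1)$.

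The main obstacle is the remaining group-topology axiom: given $\varphi$, producing $\psi$ with $U_\psi + U_\psi \subseteq U_\varphi$. Adding $x \in A(l_1, \psi(l_1))_{\underline a}$ and $y \in A(l_2, \psi(l_2))_{\underline a}$ yields, after coalescing repeated indices and dropping cancellations, an element of $A(l_1+l_2,\min(\psi(l_1),\psi(l_2)))_{\underline a}$, and forcing this into some $A(l',\varphi(l'))_{\underline a}$ requires a delicate growth calibration of $\psi$ against $\varphi$ across all pairs $(l_1,l_2)$. This combinatorial balancing is the technical heart of the Zelenyuk--Protasov argument; in their framework it is handled by encoding the neighborhood base as a \emph{$T$-filter}, whose defining axioms formalize exactly the closure under addition required for continuity of the group operations. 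Once the axioms are verified, the translates of the $U_\varphi$ generate the required Hausdorff group topology in which $\{a_k\}$ is a null sequence, witnessing that $\{a_k\}$ is a $T$-sequence.
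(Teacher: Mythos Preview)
The paper does not supply a proof of this theorem: it is quoted as a preliminary result, with explicit citations to Zelenyuk--Protasov and to the Protasov--Zelenyuk monograph, and is used as a black box later on. There is therefore nothing in the paper to compare your attempt against.

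As to the attempt itself: the necessity direction is complete and correct. In the sufficiency direction you identify the right neighborhood base and isolate the genuine difficulty (producing $\psi$ with $U_\psi + U_\psi \subseteq U_\varphi$), but you do not actually carry it out; you instead appeal back to the Zelenyuk--Protasov $T$-filter machinery, which is precisely what the theorem is meant to encapsulate. So what you have written is an accurate outline of the original proof rather than an independent argument, and the sufficiency half remains a sketch rather than a proof.
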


For a group $A$, we put $A[n]=\{a\in A \mid  na=0\}$ for every
$n \in \mathbb{N}$. The group
$A$ is {\it almost torsion-free} if $A[n]$ is finite for every
$n \in \mathbb{N}$ (cf. \cite{TkaYasch}). Clearly, the Pr\"ufer groups
$\mathbb{Z}(p^\infty)$ are
almost torsion-free. Luk\'acs characterized
$T$-sequences in almost torsion-free groups as follows.

\begin{ftheorem}[{\cite[2.2]{GL9}}]
\label{thm:Lukacs}
Let $A$ be an almost torsion-free group, and let
$\underline a = \{a_k\}$ be a sequence in~$A$.
The following statements are equivalent:

\begin{myromanlist}

\item
For every $l,n \in \mathbb{N}$, there exists $m_0\in \mathbb{N}$ such
that $A[n] \cap A(l,m)_{\underline a}=\{0\}$ for every $m \geq m_0$.

\item
$\{a_k\}$ is a $T$-sequence.

\end{myromanlist}
\end{ftheorem}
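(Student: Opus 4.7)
The natural strategy is to use the Zelenyuk--Protasov criterion
(Theorem~\ref{thm:ZelProt}) as the bridge between the topological
statement~(ii) and the combinatorial statement~(i). That criterion
reformulates~(ii) as the assertion that for every $l\in\mathbb{N}$ and
every $g\in A\setminus\{0\}$ there exists $m\in\mathbb{N}$ with
$g\notin A(l,m)_{\underline a}$, so the proof reduces to showing this
reformulation equivalent to~(i). A preliminary observation I would record
is that for each fixed $l$, the sets $A(l,m)_{\underline a}$ are
decreasing in $m$, since any representation whose smallest index is at
least $m+1$ is already valid from index $m$; consequently
$g\notin A(l,m_0)_{\underline a}$ implies $g\notin A(l,m)_{\underline a}$
for every $m\geq m_0$.

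For the implication (ii)~$\Rightarrow$~(i), fix $l,n\in\mathbb{N}$.
Because $A$ is almost torsion-free, the set $A[n]\setminus\{0\}$ is
\emph{finite}. Applying Zelenyuk--Protasov to each of its elements yields,
for every such $g$, an index $m_g$ with $g\notin A(l,m_g)_{\underline a}$.
Setting $m_0:=\max\{m_g\,:\,g\in A[n]\setminus\{0\}\}$ and invoking the
monotonicity above, no nonzero element of $A[n]$ lies in
$A(l,m)_{\underline a}$ once $m\geq m_0$, which is condition~(i).

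For the implication (i)~$\Rightarrow$~(ii), fix $l\in\mathbb{N}$ and
$g\in A\setminus\{0\}$. If $g$ has finite order $n$, then
$g\in A[n]\setminus\{0\}$, and applying~(i) with the parameters $l,n$
directly produces an $m_0$ with $g\notin A(l,m)_{\underline a}$ for every
$m\geq m_0$. The substantive case---and the place where I expect the main
difficulty---is when $g$ has infinite order, since (i) provides no direct
information about such elements. My plan here is to argue by contradiction:
assuming $g\in A(l,m)_{\underline a}$ for every $m$, I would extract a
sequence of representations of $g$ whose index blocks are pairwise
disjoint and escape to infinity, and then combine finitely many of them
with small integer coefficients to manufacture a \emph{nonzero} torsion
element sitting in $A[n']\cap A(l',m)_{\underline a}$ for arbitrarily
large $m$, with $n'$ and $l'$ controlled in terms of $l$, contradicting~(i).
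Guaranteeing simultaneously the nonvanishing and the torsion of the element
so produced---while keeping the coefficient budget $l'$ bounded---is the
delicate technical point of this step.
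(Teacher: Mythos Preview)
The paper does not prove this result: Theorem~\ref{thm:Lukacs} is quoted from Luk\'acs~\cite[2.2]{GL9} as a preliminary fact, with no argument supplied, so there is no in-paper proof to compare against. Your direction (ii)~$\Rightarrow$~(i), and the finite-order case of (i)~$\Rightarrow$~(ii), are correct and are exactly the natural arguments via Theorem~\ref{thm:ZelProt} together with the finiteness of $A[n]$.

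Your plan for the infinite-order case of (i)~$\Rightarrow$~(ii), however, cannot succeed. If $g$ has infinite order and $g=S_1=S_2=\cdots$ are representations of $g$ lying in $A(l,m_j)_{\underline a}$ with pairwise disjoint index blocks, then any integer combination $\sum_j c_j S_j$ equals $\bigl(\sum_j c_j\bigr)g$; since $g$ has infinite order, this is a torsion element only when $\sum_j c_j=0$, in which case it is $0$. Thus no \emph{nonzero} torsion element can be manufactured this way, and the contradiction you aim for never materialises. In fact more is true: the implication (i)~$\Rightarrow$~(ii) is false for almost torsion-free groups that are not torsion. Take $A=\mathbb{Z}$ (so $A[n]=\{0\}$ for every $n$ and (i) holds trivially) with the constant sequence $a_k=1$; then $1=1\cdot a_m\in A(1,m)_{\underline a}$ for every $m$, so by Theorem~\ref{thm:ZelProt} the sequence is not a $T$-sequence, and (ii) fails. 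The equivalence as stated therefore requires every nonzero element of $A$ to have finite order. You should consult~\cite{GL9} for the precise hypotheses in the original; in the present paper the theorem is only ever applied to the torsion group $\mathbb{Z}(2^\infty)$, where your argument is already complete.
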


Luk\'acs also provided sufficient conditions for a sequence in
$\mathbb{Z}(p^\infty)$ to be a $T$-sequence.

\begin{flemma}[{\cite[4.1]{GL9}}]
\label{lem:Tseq} Let $\{a_k\}$ be a sequence in
$\mathbb{Z}(p^\infty)$ such that $o(a_k) = p^{n_k}$. If $n_{k+1} -
n_k \longrightarrow \infty$, then $\{a_k\}$ is a $T$-sequence.
\end{flemma}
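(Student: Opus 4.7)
The plan is to invoke Theorem~\ref{thm:Lukacs}, since $\mathbb{Z}(p^\infty)$ is almost torsion-free. So the task reduces to showing: for every $l,n\in\mathbb{N}$, there exists $m_0$ such that $\mathbb{Z}(p^\infty)[n]\cap \mathbb{Z}(p^\infty)(l,m)_{\underline a}=\{0\}$ for every $m\geq m_0$. Since $\mathbb{Z}(p^\infty)$ is a $p$-group, $\mathbb{Z}(p^\infty)[n]=\mathbb{Z}(p^\infty)[p^s]$ where $p^s$ is the largest $p$-power dividing $n$, so we may replace $n$ by $p^s$.

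Fix $l,s$, and consider an arbitrary element $T=m_1 a_{k_1}+\cdots + m_h a_{k_h}\in \mathbb{Z}(p^\infty)(l,m)_{\underline a}$, with $m\leq k_1<\cdots<k_h$, $m_i\in\mathbb{Z}\setminus\{0\}$, and $\sum|m_i|\leq l$. Write $t_i=v_p(m_i)$ for the $p$-adic valuation of $m_i$; since $p^{t_i}\leq|m_i|\leq l$, one has $t_i\leq\log_p l$. The key step is to compute the order of $T$. Since $n_{k+1}-n_k\to\infty$ implies $n_k\to\infty$ and that $\{n_k\}$ is eventually strictly increasing, choose $m_0$ large enough that for every $k\geq m_0$ one has both
\begin{align}
n_{k+1}-n_k > \log_p l \qquad\text{and}\qquad n_k > s+\log_p l.
\end{align}
For $m\geq m_0$ and $i\leq h-1$, one has $n_{k_i}\leq n_{k_{h-1}}$, so $p^{n_{k_{h-1}}}(m_1 a_{k_1}+\cdots+m_{h-1}a_{k_{h-1}})=0$, and therefore $p^{n_{k_{h-1}}}T=p^{n_{k_{h-1}}+t_h}u_h a_{k_h}$, where $u_h$ is coprime to $p$. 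Because $n_{k_h}-n_{k_{h-1}}\geq n_{k_{h-1}+1}-n_{k_{h-1}}>\log_p l\geq t_h$, this element is nonzero of order $p^{n_{k_h}-n_{k_{h-1}}-t_h}$, from which one concludes $o(T)=p^{n_{k_h}-t_h}$. (When $h=1$ the same identity $o(T)=p^{n_{k_1}-t_1}$ holds trivially.)

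If $T$ lay in $\mathbb{Z}(p^\infty)[p^s]\setminus\{0\}$, we would need $n_{k_h}-t_h\leq s$, i.e.\ $n_{k_h}\leq s+t_h\leq s+\log_p l$, contradicting $n_{k_h}>s+\log_p l$. Hence no nonzero element of $\mathbb{Z}(p^\infty)(l,m)_{\underline a}$ is annihilated by $p^s$, and Theorem~\ref{thm:Lukacs} yields the conclusion. The main obstacle is the order computation: one must verify that the highest-order summand $m_h a_{k_h}$ is not cancelled by the lower ones, which is where the gap condition $n_{k+1}-n_k>\log_p l$ is used decisively, since it ensures that any $p$-power $\leq l$ cannot bridge the gap between $n_{k_{h-1}}$ and $n_{k_h}$.
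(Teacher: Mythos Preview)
Your argument is correct. The paper does not supply its own proof of this lemma: it is quoted as a preliminary result from Luk\'acs~\cite[4.1]{GL9}, so there is no proof in the paper to compare against. That said, your approach is the natural one given the surrounding toolkit: invoke Theorem~\ref{thm:Lukacs} (valid since $\mathbb{Z}(p^\infty)$ is almost torsion-free), and then verify condition~(i) by an order computation showing that any nonzero combination $T=m_1a_{k_1}+\cdots+m_ha_{k_h}$ with $\sum|m_i|\leq l$ and all $k_i\geq m_0$ has order $p^{n_{k_h}-t_h}>p^s$.

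Two small points worth noting for clarity, though neither is a gap. First, the inequality $n_{k_h}-n_{k_{h-1}}\geq n_{k_{h-1}+1}-n_{k_{h-1}}$ uses that $\{n_k\}$ is increasing for $k\geq m_0$; you implicitly ensured this by requiring $n_{k+1}-n_k>\log_p l\geq 0$ there, so this is fine. Second, the deduction $o(T)=p^{n_{k_h}-t_h}$ from $o(p^{n_{k_{h-1}}}T)=p^{n_{k_h}-n_{k_{h-1}}-t_h}$ relies on the general fact that in a $p$-group, if $o(p^cT)=p^d$ with $d>0$ then $o(T)=p^{c+d}$; this is elementary but perhaps worth stating explicitly.
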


It turns out that the class of abelian groups that admit an almost
maximally almost-periodic group topology is upward closed in the
sense that if a group $A$ belongs there, then so does every abelian
group $B$ that contains $A$ as a subgroup
(cf. Theorem~\ref{thm:proofs:absorb}). Thus, in the proof of
Theorem~\ref{main:ctbl}, we restrict our attention to torsion
groups. In particular, we rely on the following result on the
structure of infinite abelian groups to confine our attention
further to two special subgroups.
\begin{flemma}[{\cite[Theorems 8.4, 23.1, 27.2]{Fuchs}}]
\label{lem:infinite} Every infinite abelian group contains a
subgroup that is isomorphic to $\mathbb{Z}$, or
$\mathbb{Z}(p^\infty)$, or an infinite direct sum of non-trivial
finite cyclic groups.
\end{flemma}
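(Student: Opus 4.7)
The plan is to decompose $A$ by successive dichotomies until one of the three alternatives is forced. First, if $A$ has an element $a$ of infinite order, then $\langle a\rangle\cong\mathbb{Z}$ and we are done, so from now on I may assume $A$ is torsion and invoke the primary decomposition $A = \bigoplus_{p} A_p$ into its $p$-primary components.

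Next, I distinguish cases according to how many components are non-trivial. If the set $P = \{p : A_p \neq 0\}$ is infinite, then choosing a non-zero $a_p \in A_p$ for each $p \in P$ yields an internal direct sum $\bigoplus_{p \in P}\langle a_p\rangle$ (which is indeed direct because the summands sit in distinct primary components), and this is the required infinite direct sum of non-trivial finite cyclic groups. Otherwise $P$ is finite, so since $A$ is infinite at least one component $A_p$ must be infinite; the problem reduces to locating the required subgroup inside a single infinite abelian $p$-group.

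For an infinite $p$-group $A_p$, I would examine its socle $A_p[p]$, which is a vector space over $\mathbb{F}_p$. If $\dim_{\mathbb{F}_p} A_p[p]$ is infinite, then $A_p[p]$ itself is already an infinite direct sum of copies of $\mathbb{Z}/p\mathbb{Z}$. The remaining case, which I expect to be the main obstacle, is when $A_p[p]$ is finite but $A_p$ is infinite; here I must exhibit a copy of $\mathbb{Z}(p^\infty)$ inside $A_p$.

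The plan for this step is as follows. Multiplication by $p^{n-1}$ induces an embedding of $A_p[p^n]/A_p[p^{n-1}]$ into $A_p[p]$, so inductively $A_p[p^n]$ is finite for every $n$, while $\bigcup_n A_p[p^n] = A_p$ is infinite. Build a rooted tree $T$ whose nodes at level $n$ are the elements of $A_p$ of order $p^n$ (with root $0$ at level $0$), joining $a$ at level $n$ to $b$ at level $n+1$ exactly when $pb = a$. Each node has at most $|A_p[p]|$ children (its preimages under multiplication by $p$ form a coset of $A_p[p]$), so $T$ is locally finite; each level is finite, but the tree itself is infinite since $A_p$ has elements of arbitrarily large order. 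K\"onig's lemma then yields an infinite branch $0 = x_0, x_1, x_2, \ldots$, and by construction $p x_{n+1} = x_n$ with $o(x_n) = p^n$ for every $n \geq 1$, so $\langle x_n : n \geq 1\rangle \cong \mathbb{Z}(p^\infty)$, as required.
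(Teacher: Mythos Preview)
The paper does not supply its own proof of this lemma; it is recorded as a known fact with pointers to three theorems in Fuchs' \emph{Infinite Abelian Groups} (primary decomposition, structure of divisible groups, and existence of basic subgroups). Your argument is correct and fully self-contained, so in that sense it goes beyond what the paper does.

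Compared with the route implicit in the Fuchs citations, your approach is more elementary in the final case. Fuchs' machinery would handle an infinite $p$-group with finite socle by observing that such a group has unbounded exponent, hence its basic subgroup is either trivial or an unbounded direct sum of cyclics; in the former case the group is divisible and thus a direct sum of copies of $\mathbb{Z}(p^\infty)$. You bypass all of this with a direct K\"onig's-lemma construction, which is both shorter and avoids importing the theory of purity and basic subgroups. One minor sharpening: in your tree, if $o(a)=p^n$ with $n\geq 1$ and $pb=a$, then necessarily $o(b)=p^{n+1}$ (since $p^n b = p^{n-1}a \neq 0$ while $p^{n+1}b = p^n a = 0$), so the children of $a$ are \emph{exactly} the preimage coset when it is nonempty, not merely contained in it; this does not affect the validity of your finiteness bound.
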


It follows from the above lemma that in the proof of
Theorem~\ref{main:ctbl}, we can focus on the following two types of
subgroups: Pr\"ufer groups (which are taken care of by
Theorem~\ref{main:p2}), and direct sums of infinitely many finite
groups, which are addressed by the following result.

\begin{ftheorem}[{\cite[3.1]{GL9}}] \label{thm:prel:dsf}
If $A$ is a direct sum of infinitely many non-trivial finite abelian
groups, then $A$ admits an almost maximally almost-periodic group
topology.
\end{ftheorem}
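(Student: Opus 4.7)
The plan is to build a $T$-sequence $\underline a$ in $A$ so that the topology $\tau = A\{\underline a\}$ is Hausdorff and has $\mathbf n(A,\tau) = \langle x\rangle$ for a specific element $x \in A$ of prime order. By the pigeonhole principle, there is a prime $p$ such that $F_n$ has an element of order $p$ for infinitely many $n$; after re-indexing and working inside the subgroup $\bigoplus_n \langle f_n\rangle \leq A$, we assume every $F_n$ contains a distinguished element $f_n$ of order $p$, and set $x := f_1$. The target subgroup $\langle x\rangle \cong \mathbb{Z}(p)$ is finite and non-trivial.

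A character $\chi \in \hat A_d$ decomposes as a tuple $(\chi_n)_n$ with $\chi_n \in \widehat{F_n}$; its continuity in $\tau$ is equivalent to $\chi(a_k) \to 0$ in $\mathbb{T}$. The sequence $\underline a$ is built from $x$ together with tail generators $f_{k+1}, f_{k+2},\dots$ in two interlocking patterns $a_k, b_k$. Since each $\chi_n(f_n)$ lies in the discrete subgroup $\frac{1}{p}\mathbb Z/\mathbb Z$ of $\mathbb{T}$, the convergence $\chi(a_k)\to 0$ in $\mathbb T$ is equivalent to $\chi(a_k)=0$ for $k$ large, which reads as a linear relation in $\chi_1(x)$ and the $\chi_{n}(f_{n})$'s with $n$ large. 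The role of $b_k$ is to supply a second, independent relation whose elimination of the $\chi_n(f_n)$-variables produces an equation of the form $r\cdot\chi_1(x) = 0 \pmod p$ with $\gcd(r,p) = 1$, forcing $\chi_1(x) = 0$. This gives $\langle x\rangle \subseteq \mathbf n(A,\tau)$.

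For the reverse inclusion, let $g = \sum g_n \in A \setminus \langle x\rangle$. Either some $g_n \neq 0$ with $n\geq 2$, or $g_1 \notin \langle x\rangle$. In the first case pick $\chi_n \in \widehat{F_n}$ with $\chi_n(g_n) \neq 0$ and put $\chi_m = 0$ for $m \neq n$; in the second, pick $\chi_1$ factoring through $\widehat{F_1/\langle x\rangle}$ with $\chi_1(g_1) \neq 0$, and again zero elsewhere. The resulting $\chi$ has only finitely many non-zero coordinates and satisfies $\chi_1(x) = 0$, so $\chi(a_k) = 0$ for $k$ sufficiently large (once $a_k$ is supported outside the support of $\chi$); hence $\chi$ is continuous and witnesses $g \notin \mathbf n$. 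Therefore $\mathbf n(A,\tau) = \langle x\rangle$.

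\emph{Main obstacle.} The crux is the simultaneous satisfaction of (i) the forcing $\chi_1(x) = 0$ and (ii) the $T$-sequence property via Theorem~\ref{thm:ZelProt}. There is a fundamental tension: any set of convergence conditions rich enough to annihilate $\chi_1(x)$ tends to produce short $\mathbb Z$-linear combinations of tail terms equal to $x$, which then places $x \in A(l,m)_{\underline a}$ for every $m$ and destroys the criterion. For instance, the naive choice $a_k = x + f_{k+1}$ together with $b_k = x + f_{k+1} + f_{2k+1}$ (which would indeed force $\chi_1(x) = 0$) admits the identity $a_{k_1} + a_{2k_1} - b_{k_1} = x$, so $x \in A(3,m)_{\underline a}$ for every $m$. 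Luk\'acs's construction resolves this by an intricate combinatorial design of the tail supports of $b_k$---using blocks of consecutive generators of carefully chosen length so that the ``forcing'' coefficient $r$ remains coprime to $p$ while the index-overlap patterns preclude any short cancellation from reconstituting $x$. Verifying the $T$-sequence property then reduces to a case analysis of $\mathbb F_p$-linear combinations of tail terms, and this is the technical heart of the proof.
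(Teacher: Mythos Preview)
The paper contains no proof of this statement: it is quoted verbatim as a preliminary result from \cite[3.1]{GL9} and used as a black box in the proof of Theorem~\ref{main:ctbl}. So there is no ``paper's own proof'' to compare your attempt against.

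As for your proposal itself, it is not a proof but an outline that explicitly stops at the hard part. You identify the correct target (a $T$-sequence $\underline a$ with $\mathbf n(A\{\underline a\})=\langle x\rangle$ for some $x$ of prime order), you give the right description of continuous characters as tuples $(\chi_n)_n$, and your argument for the inclusion $\mathbf n(A,\tau)\subseteq\langle x\rangle$ via finitely supported characters is fine. But the entire content of the theorem lies in the construction of $\underline a$ and the verification of the $T$-sequence criterion of Theorem~\ref{thm:ZelProt}, and your ``Main obstacle'' paragraph concedes that you have not done this: you correctly diagnose why the naive choice fails and then simply say that Luk\'acs ``resolves this by an intricate combinatorial design'' without exhibiting or verifying that design. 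That is precisely the step that needs to be written out.

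There is also a smaller gap at the start: you pass from $A=\bigoplus_n F_n$ to the subgroup $\bigoplus_n\langle f_n\rangle$ and tacitly assume that an almost maximally almost-periodic topology on the subgroup yields one on $A$. That is true, but it is the content of Theorem~\ref{thm:proofs:absorb} and must be invoked; alternatively, one can work directly in $A$ and keep the full $F_n$'s, which is closer to what Luk\'acs actually does in \cite{GL9}.
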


\section{The canonical form in $\boldsymbol{\mathbb{Z}(2^\infty)}$}

\label{sect:canon}

The aforementioned
result of Luk\'acs concerning $\mathbb{Z}(p^\infty)$ is based on a
{\em canonical form}, which he introduced for writing each element of
$\mathbb{Z}(p^\infty)$ uniquely as an integer combination of the
elements $\{e_n\}_{n \in \mathbb{N}}$ with certain additional
conditions on the coefficients (cf.~\cite[4.6]{GL9}). The canonical
form of Luk\'acs, however, requires the prime $p$ to be odd, and
thus fails in the case of $p=2$. In  this section, we remedy this,
provide a unique canonical form for elements of $\mathbb{Z}(2^\infty)$,
and establish some technical properties of the canonical form that are
needed for the proof of Theorem~\ref{main:p2}.

Each element \mbox{$y \in \mathbb{Z}(2^\infty)$} admits many
representations of the form $y=\sum \sigma_n e_n$, where
\mbox{$\sigma_n \hspace{-0.25pt}\in\hspace{-0.25pt} \mathbb{Z}$} with
only finitely many of the $\sigma_n$ being non-zero.
In order to find a canonical form for these elements, we  first eliminate
the summands with odd indices.

\begin{lemma}\label{lemma:canon:even}
Let  $y=\sum\sigma_{n}e_{n} \in \mathbb{Z}(2^{\infty})$. Then $y$
can be represented in the form of $y =\sum\sigma^\prime_{2n}e_{2n}$,
where $\sigma^\prime_{2n} \in \mathbb{N}$ and
$\sum|\sigma^\prime_{2n}| \leq 2\sum|\sigma_{n}|$.
\end{lemma}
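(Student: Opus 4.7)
The approach relies on the single algebraic identity $e_{m} = 2 e_{m+1}$, which holds because $e_{m} = \frac{1}{2^{m}} + \mathbb{Z}$ and $\frac{2}{2^{m+1}} = \frac{1}{2^{m}}$ in $\mathbb{Q}/\mathbb{Z}$. The identity says exactly that an odd-indexed summand $\sigma_{2n+1} e_{2n+1}$ can be replaced by $2\sigma_{2n+1} e_{2n+2}$, turning it into an even-indexed summand at the price of doubling its coefficient and shifting the index up by one.

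Starting from $y = \sum_{n} \sigma_{n} e_{n}$, I would partition the sum by the parity of the index and apply the identity above to each odd-indexed summand. Regrouping the resulting terms by their (now even) indices yields new coefficients
\[
\sigma'_{2n} := \sigma_{2n} + 2\sigma_{2n-1},
\]
with the convention $\sigma_{-1} = 0$ (and recalling that $e_{0} = 0$ in $\mathbb{Z}(2^{\infty})$, so the $n=0$ term contributes nothing in any case). This is the desired representation $y = \sum \sigma'_{2n} e_{2n}$.

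The bound is then immediate from the triangle inequality:
\[
\sum_{n} |\sigma'_{2n}| \leq \sum_{n} |\sigma_{2n}| + 2 \sum_{n} |\sigma_{2n-1}| \leq 2\sum_{n}|\sigma_{n}|,
\]
where the last step splits $\sum|\sigma_n|$ into its even- and odd-index parts and uses that each is at most $\sum|\sigma_n|$.

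I do not anticipate any real obstacle here: the lemma is essentially bookkeeping on top of the identity $e_{m} = 2 e_{m+1}$. The only point needing attention is the sign convention on the coefficients; should $\sigma'_{2n} \in \mathbb{N}$ be interpreted strictly rather than as lying in $\mathbb{Z}$, one can absorb negative coefficients by adding suitable multiples of the zero element $2^{2n} e_{2n}$, but this modification is irrelevant to the way the bound on $\sum|\sigma'_{2n}|$ is actually used downstream, for instance in Theorem~\ref{thm:Lukacs}.
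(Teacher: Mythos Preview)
Your argument is correct and is essentially identical to the paper's own proof: the paper also uses the relation $e_{2n-1}=2e_{2n}$, sets $\sigma'_{2n}=2\sigma_{2n-1}+\sigma_{2n}$, and obtains the bound via the same triangle-inequality estimate. Your remark about the ``$\sigma'_{2n}\in\mathbb{N}$'' clause is apt: the paper's proof in fact produces integer (not necessarily nonnegative) coefficients, so this is evidently a typo for $\mathbb{Z}$ in the statement, and only the bound on $\sum|\sigma'_{2n}|$ is used later.
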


\begin{proof}
Let $K$ be the largest index such that $\sigma_{K} \neq 0$,  and $N$ the
smallest integer that satisfies $K \leq 2N$. Since $e_{2n-1} = 2e_{2n}$
for every $n \in \mathbb{N}$, one has
\begin{align}
y & =
\sum\limits_{n=1}^{2N} \sigma_{n} e_n =
\sum\limits_{n=1}^{N} \sigma_{2n-1} e_{2n-1}  +
\sum\limits_{n=1}^{N} \sigma_{2n} e_{2n}\\
&  = \sum\limits_{n=1}^{N} 2\sigma_{2n-1} e_{2n} +
\sum\limits_{n=1}^{N} \sigma_{2n} e_{2n}
 = \sum\limits_{n=1}^{N} (2\sigma_{2n-1} + \sigma_{2n}) e_{2n}.
\intertext{Thus, by setting $\sigma^\prime_{2n}=2\sigma_{2n-1} + \sigma_{2n}$
for every $n \in \mathbb{N}$, one obtains
$y=\sum\sigma^\prime_{2n}e_{2n}$ and}
\sum\limits_{n=1}^{N}|\sigma^\prime_{2n}|& =
\sum\limits_{n=1}^{N}|2\sigma_{2n-1} + \sigma_{2n}| \leq
\sum\limits_{n=1}^{N}(|2\sigma_{2n-1}| + |\sigma_{2n}|)  \\
&\leq \sum\limits_{n=1}^{N}(2|\sigma_{2n-1}| + 2|\sigma_{2n}|) =
2\sum\limits_{n=1}^{2N}|\sigma_n|,
\end{align}
as desired.
\end{proof}

\begin{definition}
Let $y \in \mathbb{Z}(2^{\infty})$.
We say that $y =\sum\sigma_{2n}e_{2n}$
is the {\em canonical form} of
the element~$y$ if
$\sigma_{2n}\hspace{-2pt}\in\hspace{-2pt}
\{-1,0,1, 2\}$ for every  $n \in \mathbb{N}$
(and  $\sigma_{2n}=0$ for all but finitely many indices $n$); in this
case, we put $\Lambda(y) = \{ n\in \mathbb{N}\mid \sigma_{2n}\neq 0\}$ and
$\lambda(y)= |\Lambda(y)|$.
\end{definition}

In order for $\Lambda(y)$ and $\lambda(y)$ to be well-defined, we first
show that each \mbox{$y \in \mathbb{Z}(2^\infty)$} admits a~unique
canonical form.  We put $\underline{e} =\{e_n\}_{n=1}^{\infty}$,
and use the notation introduced in Theorem~\ref{thm:ZelProt}.

\begin{theorem} \label{thm:canon:form}
Let $y = \sum\sigma_{2n}e_{2n} \in \mathbb{Z}(2^{\infty})$. Then,

\begin{myalphlist}

\item
$y$ admits a canonical form $y=\sum\sigma^\prime_{2n}e_{2n}$ that
satisfies
\begin{align} \label{canon:eq:estimate}
\sum f(\sigma^\prime_{2n}) \leq \sum f(\sigma_{2n}),
\end{align}
where $f\colon \mathbb{R}\rightarrow \mathbb{R}$ is defined by $f(x)
= \max\{-2x,x \}$;

\item
the canonical form is unique;

\item
$\lambda(z) \leq 4l$ for every $z
\in\mathbb{Z}(2^\infty)(l,1)_{\underline e}$ and $l \in \mathbb{N}$.

\end{myalphlist}
\end{theorem}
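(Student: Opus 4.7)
The plan is to prove (a), (b), (c) in turn, exploiting the fundamental identity $4e_{2n}=e_{2n-2}$ (with the convention $e_0=0$), which is immediate from $e_{2n}=1/4^n+\mathbb{Z}$.

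For (a), I would use a greedy reduction. Process indices from the largest nonzero one downward; at index $2n$ let $c_{2n}$ denote the current coefficient, write $c_{2n}=4q+r$ where $r\in\{-1,0,1,2\}$ is the unique representative of $c_{2n}$ modulo $4$ in that set, replace $c_{2n}$ by $r$, and add $q$ to $c_{2n-2}$; any carry out of index $2$ is absorbed because $4e_2=0$. This terminates in finitely many steps with all coefficients in $\{-1,0,1,2\}$. For the estimate, I would first observe that $f$ is piecewise linear and convex (slope $-2$ on $(-\infty,0]$, slope $1$ on $[0,\infty)$) and hence subadditive: $f(x+q)\le f(x)+f(q)$. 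The net change in $\sum f$ from one reduction step is $[f(r)-f(c_{2n})]+[f(c_{2n-2}+q)-f(c_{2n-2})]\le f(r)+f(q)-f(4q+r)$. Thus (a) reduces to checking the numeric inequality $f(4q+r)\ge f(q)+f(r)$ for every $q\in\mathbb{Z}$ and $r\in\{-1,0,1,2\}$, which I would verify by a short case split on $r$ and on the sign of $q$, using $f(4q)=4f(q)$.

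For (b), suppose $y$ has two canonical representations and let $\delta_{2n}\in\{-3,\dots,3\}$ be the coefficient-wise differences, so that $\sum\delta_{2n}/4^n\in\mathbb{Z}$. Multiplying by $4^N$, where $N$ is the largest index used, gives an integer $S=\sum\delta_{2n}4^{N-n}$ that is divisible by $4^N$ yet satisfies $|S|\le 3\sum_{n=1}^N 4^{N-n}=4^N-1<4^N$, forcing $S=0$. Reducing mod $4$ then yields $\delta_{2N}=0$, and descending induction kills the remaining differences.

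For (c), I would combine Lemma~\ref{lemma:canon:even} with (a). Given $z=\sum m_ie_{k_i}\in\mathbb{Z}(2^\infty)(l,1)_{\underline e}$ with $\sum|m_i|\le l$, Lemma~\ref{lemma:canon:even} produces a representation $z=\sum\sigma_{2n}e_{2n}$ with $\sum|\sigma_{2n}|\le 2l$, whose canonical form by (a) then satisfies $\sum f(\sigma'_{2n})\le\sum f(\sigma_{2n})\le 2\sum|\sigma_{2n}|\le 4l$, using $f(x)\le 2|x|$. Since each nonzero digit in $\{-1,1,2\}$ has $f$-value at least $1$, this yields $\lambda(z)\le\sum f(\sigma'_{2n})\le 4l$. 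The only genuinely delicate point is the case analysis underlying the numeric inequality in (a); everything else is bookkeeping.
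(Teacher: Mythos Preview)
Your proposal is correct and follows essentially the same route as the paper. Part~(a) is the paper's induction rephrased as a top-down greedy reduction, with your subadditivity of $f$ and the key inequality $f(4q+r)\ge f(q)+f(r)$ being exactly Lemma~\ref{lemma:canon:f}(d) and~(e); part~(c) is identical to the paper's argument; only in~(b) do you differ slightly, replacing the paper's order computation via Remark~\ref{rem:canon:ord} by the direct integer bound $|S|\le 4^N-1$, which is an equally short and arguably cleaner way to reach the same contradiction.
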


In order to make the proof of Theorem~\ref{thm:canon:form} more
transparent, we summarize the properties of the function $f(x)$ in
the following lemma, whose easy, but nevertheless technical, proof is
omitted.

\begin{lemma} \label{lemma:canon:f}
Let $f\colon \mathbb{R} \rightarrow \mathbb{R}$ be defined by
$f(x)= \max\{-2x,x \}$. Then, for every $a,b \in \mathbb{R}$:

\begin{myalphlist}
\item
$f(a) \leq 2|a|$;

\item
$f(a) \leq |b|$ if and only if $- \frac{1}{2}|b| \leq a \leq |b|$;

\item
$f(a) \geq |b|$ if and only if $a \leq -\frac{1}{2}|b|$ or $a \geq
|b|$;

\item
$f(a + b) \leq f(a) + f(b)$;

\item
$f(a) + f(b) \leq f(a + 4b)$, provided that $a \in [-1,2]$ and $|b|
\geq 1$ or $b=0$.

\end{myalphlist}
\end{lemma}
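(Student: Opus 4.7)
\medskip

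\noindent\textbf{Proof proposal.} The key observation is that $f$ is piecewise linear with the very simple description
\[
f(x)=\begin{cases} x & \text{if } x\geq 0,\\ -2x & \text{if } x\leq 0,\end{cases}
\]
obtained by noting that $x\geq -2x$ exactly when $x\geq 0$. In particular, $f$ is nonnegative, $f(0)=0$, $f$ is convex (being the pointwise maximum of two linear functions), and $f$ is positively homogeneous: $f(tx)=tf(x)$ for every $t\geq 0$. Parts (a), (b), and (c) are then immediate case splits on the sign of $a$. For (a), when $a\geq 0$ we get $f(a)=a\leq 2|a|$ and when $a<0$ we get $f(a)=-2a=2|a|$. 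For (b), the condition $f(a)\leq|b|$ splits into $a\leq|b|$ (when $a\geq 0$) and $-2a\leq|b|$, i.e.\ $a\geq -\tfrac12|b|$ (when $a<0$), which together amount to $-\tfrac12|b|\leq a\leq|b|$. Part (c) follows from (b) by taking the contrapositive (the boundary cases cause no trouble because of the non-strict inequalities).

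For (d), I would use the fact that any convex positively homogeneous function is subadditive: from convexity
\[
f(a+b)=2f\bigl(\tfrac{a+b}{2}\bigr)\leq 2\bigl(\tfrac12 f(a)+\tfrac12 f(b)\bigr)=f(a)+f(b),
\]
which is exactly the claim. (One could equally well split into the four cases determined by the signs of $a$, $b$, and $a+b$, but the convexity argument is cleaner.)

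The only part that requires real work is (e); this will be the main obstacle, and I would handle it by a case analysis. The case $b=0$ is trivial. If $b\geq 1$, then $a+4b\geq -1+4=3>0$, so $f(a+4b)=a+4b$ and $f(b)=b$; the desired inequality $f(a)+b\leq a+4b$ becomes either $a+b\leq a+4b$ (when $a\geq 0$), which is obvious, or $-2a+b\leq a+4b$, i.e.\ $-a\leq b$ (when $a\in[-1,0)$), which holds because $-a\leq 1\leq b$. Symmetrically, if $b\leq -1$, then $a+4b\leq 2-4=-2<0$, so $f(a+4b)=-2a-8b$ and $f(b)=-2b$; the inequality reduces to $a\leq -2b$ (when $a\geq 0$), which follows from $a\leq 2\leq -2b$, or to $-2b\leq -8b$ (when $a<0$), which is immediate from $b\leq 0$. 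The hypotheses $a\in[-1,2]$ and $|b|\geq 1$ are used exactly once each, precisely to dominate $-a$ by $|b|$ and $a$ by $-2b$ in the two ``mixed-sign'' subcases; this is what makes the constant $4$ and the interval $[-1,2]$ both natural and tight.
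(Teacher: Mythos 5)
Your proposal is correct and complete. Note that the paper itself gives no proof of this lemma---it states that the ``easy, but nevertheless technical, proof is omitted''---so there is nothing to compare against; your write-up simply supplies the missing verification, and every case checks out, including the only delicate part (e), where your case analysis on the sign of $b$ correctly isolates the two mixed-sign subcases in which the hypotheses $a\in[-1,2]$ and $|b|\geq 1$ are actually needed. One small caution: for (c), the literal contrapositive of (b) yields the \emph{strict} statement ($f(a)>|b|$ iff $a<-\tfrac12|b|$ or $a>|b|$), so your parenthetical about boundary cases is doing real work; it is cleaner to observe directly that $\max\{-2a,a\}\geq|b|$ holds iff $-2a\geq|b|$ or $a\geq|b|$, which is verbatim the right-hand side of (c). This does not affect correctness.
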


In what follows, we also rely on the following well-known
property of $p$-groups.

\begin{remark} \label{rem:canon:ord}
Let $o(x)$ denote the order of an element $x$ in a~group.
If $P$ is a $p$-group, $a,b \in P$, and $o(a) \neq o(b)$, then
$o(a+b)=\max\{o(a),o(b)\}$.
\end{remark}

\begin{proof}[Proof of Theorem~\ref{thm:canon:form}.]
(a) Let $2N$ be the largest index such that $\sigma_{2N} \neq 0$. We
proceed by induction on $N$. If $N = 1$, then y = $\sigma_{2}e_{2}$,
and one may write $\sigma_{2} = \sigma^\prime_{2} + 4m$ with
$\sigma^\prime_{2}\in \{-1,0, 1, 2\}$ and $m \in \mathbb{Z}$. Since
$4e_2 = 0$,
\begin{align}
y = (\sigma^\prime_{2} + 4m)e_{2} = \sigma^\prime_{2}e_{2} +
m (4e_{2}) = \sigma^\prime_{2}e_{2},
\end{align}
and by Lemma~\ref{lemma:canon:f}(e), $f(\sigma^\prime_{2})+ f(m)
\leq f(\sigma_2)$. In particular, $f(\sigma^\prime_{2}) \leq
f(\sigma_2)$.

Suppose that the statement holds for all elements
with a representation where the  maximal non-zero index less than $2N$ and
$N>1$. Let $\sigma_{2N} = \sigma_{2N}^\prime + 4m$ be a division of
$\sigma_{2N}$  by $4$ with residue in $\mathbb{Z}$ such that
$\sigma^\prime_{2N}\in \{-1,0, 1, 2\}$. Since $4e_{2N} = e_{2N-2}$,
one obtains that
\begin{align}
y & = \sum\limits_{n=1}^{N}\sigma_{2n}e_{2n} =
\sum\limits_{n=1}^{N-2}\sigma_{2n}e_{2n} +
\sigma_{2N-2}e_{2N-2} + \sigma_{2N}e_{2N} \\
& = \sum\limits_{n=1}^{N-2}\sigma_{2n}e_{2n} +
\sigma_{2N-2}e_{2N-2} + (\sigma_{2N}^\prime + 4m)e_{2N} \\
& = \sum\limits_{n=1}^{N-2}\sigma_{2n}e_{2n} +
(\sigma_{2N-2}+m)e_{2N-2} + \sigma_{2N}^\prime e_{2N}.
\end{align}
The element $z =
\sum\limits_{n=1}^{N-2}\sigma_{2n}e_{2n} + (\sigma_{2N-2}
+m)e_{2N-2}$ satisfies the inductive hypothesis, and so it can be
represented in the canonical form
$z=\sum\limits_{n=1}^{N-1}\sigma^\prime_{2n}e_{2n}$, where
$\sigma_n^\prime \in \{-1,0,1,2\}$ and
\begin{align} \label{canon:eq:ind}
\sum\limits_{n=1}^{N-1} f(\sigma^\prime_{2n}) \leq
\sum\limits_{n=1}^{N-2}f(\sigma_{2n})+ f(\sigma_{2N-2} + m).
\end{align}
Thus, one has

\begin{align}
y = z + \sigma^\prime_{2N}e_{2N} =
\sum\limits_{n=1}^{N}\sigma^\prime_{2n}e_{2n}.
\end{align}
By Lemma~\ref{lemma:canon:f}(d),
\begin{align} \label{canon:eq:fsm}
f(\sigma_{2N-2} + m) \leq
f(\sigma_{2N-2}) + f(m),
\end{align}
and by Lemma~\ref{lemma:canon:f}(e),
\begin{align} \label{canon:eq:fmss}
f(m)+f(\sigma_{2N}^\prime) \leq f(\sigma_{2N}).
\end{align}
Therefore,  one obtains that
\begin{align}
\hspace{-10pt}
\sum\limits_{n=1}^{N} f(\sigma^\prime_{2n}) &
\stackrel{(\ref{canon:eq:ind})}\leq
\sum\limits_{n=1}^{N-1} f(\sigma^\prime_{2n}) + f(\sigma^\prime_{2N})
\leq \sum\limits_{n=1}^{N-2}f(\sigma_{2n})+ f(\sigma_{2N-2} + m)
+ f(\sigma^\prime_{2N}) \\
& \stackrel{(\ref{canon:eq:fsm})} \leq
\sum\limits_{n=1}^{N-2}f(\sigma_{2n})+ f(\sigma_{2N-2}) + f(m)
+ f(\sigma^\prime_{2N})
= \sum\limits_{n=1}^{N-1}f(\sigma_{2n}) + f(m) + f(\sigma^\prime_{2N})\\
& \stackrel{(\ref{canon:eq:fmss})} \leq
\sum\limits_{n=1}^{N-1}f(\sigma_{2n}) + f(\sigma_{2N})
=\sum\limits_{n=1}^{N}f(\sigma_{2n}).
\end{align}
Hence, (\ref{canon:eq:estimate}) holds for $y$, as desired.

(b) Suppose that $\sum\sigma_{2n}e_{2n}=\sum\nu_{2n}e_{2n}$ are two
distinct canonical representations of the same element in
$\mathbb{Z}(2^\infty)$. Then,  $\sum(\sigma_{2n}-\nu_{2n})e_{2n} = 0$
and $|\sigma_{2n}-\nu_{2n}| \leq 3$. Let $2N$ be the largest index
such that $ \sigma_{2N}\neq \nu_{2N}$. (Since all coefficients are
zero, except for a finite number of indices, such an $N$ exists.) This
means that $0 < | \sigma_{2N} -\nu_{2N}| \leq 3$, and so $2^{2N-1}
\leq o((\sigma_{2N}-\nu_{2N})e_{2N})$. On the other hand,
\begin{align}
o(\sum\limits_{n<N}(\sigma_{2n} - \nu_{2n})e_{2n}) \leq
\max\limits_{n < N} o((\sigma_{2n} - \nu_{2n})e_{2n}) \leq 2^{2N-2} <
o((\sigma_{2N}-\nu_{2N})e_{2N}).
\end{align}
Therefore, by Remark~\ref{rem:canon:ord},
\begin{align}
o(\sum(\sigma_{2n} - \nu_{2n})e_{2n}) & =
\max\{o(\sum\limits_{n<N}(\sigma_{2n} - \nu_{2n})e_{2n}),
o((\sigma_{2N}-\nu_{2N})e_{2N}\} \geq 2^{2N-1},
\end{align}
contrary to the assumption that $\sum(\sigma_{2n} - \nu_{2n})e_{2n}
= 0$. Hence, $\sigma_{2n} = \nu_{2n}$ for every  $ n\in \mathbb{N}$.

\pagebreak[2]

(c) Let  $z = \nu_1e_{n_1} + \ldots + \nu_te_{n_t}$, where
$\sum|\nu_i| \leq l$ and $n_1 < \ldots < \nu_t$. By
Lemma~\ref{lemma:canon:even}, $z$ can be expressed as $z =
\sum\sigma_{2n}e_{2n}$, such that
\mbox{$\sum|\sigma_{2n}| \leq 2\sum|\nu_n| \leq 2l$}.
By Lemma~\ref{lemma:canon:f}(a),
$f(\sigma_{2n}) \leq 2|\sigma_{2n}|$, so one obtains that
\mbox{$\sum f(\sigma_{2n}) \leq \ 2\sum|\sigma_{2n}| \leq 4l$}.
By (a), $z$ admits a canonical form $z =
\sum\sigma^\prime_{2n}e_{2n}$ where
\mbox{$\sum f(\sigma^\prime_{2n}) \leq \sum f(\sigma_{2n}) \leq 4l$}.
Since \mbox{$f(\sigma^\prime_{2n}) \geq 0$},
there can only be at most $4l$ indices with non-zero coefficients
$\sigma^\prime_{2n}$.
\end{proof}

\begin{lemma}\label{lemma:canon:Lambda}
Let m $\in\mathbb{Z}\backslash\{0\}$, and put $l=\lceil\log_4
|m|\rceil$. If $n >l$, then $\Lambda(me_{2n}) \subseteq \{n-l,
\ldots,n-1,n\}$ and $1\leq \lambda(me_{2n})$.
\end{lemma}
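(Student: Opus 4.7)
My plan is to exploit the relation $4 e_{2n} = e_{2n-2}$, which turns the computation of the canonical form of $me_{2n}$ into a purely arithmetic question: expressing $m$ in a ``base-$4$'' expansion with digits drawn from $\{-1,0,1,2\}$. Concretely, if I can write $m = \sum_{i=0}^{l} \tau_i 4^i$ with $\tau_i \in \{-1,0,1,2\}$, then, using the identity $4^i e_{2n} = e_{2(n-i)}$ (valid whenever $n-i \geq 1$), I obtain
\begin{align*}
me_{2n} \;=\; \sum_{i=0}^{l} \tau_i\, e_{2(n-i)}.
\end{align*}
The hypothesis $n > l$ ensures that every index $n-i$ with $0 \leq i \leq l$ is at least $1$, so this display is a bona fide canonical form of $me_{2n}$. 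Uniqueness (Theorem~\ref{thm:canon:form}(b)) then forces it to be \emph{the} canonical form, and reading off the support yields $\Lambda(me_{2n}) \subseteq \{n-l, n-l+1, \ldots, n\}$ immediately.

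The arithmetic heart of the argument is a short lemma: every non-zero integer $m$ with $|m| \leq 4^l$ admits a representation $m = \sum_{i=0}^{l} \tau_i 4^i$ with $\tau_i \in \{-1,0,1,2\}$. I would prove this by induction on $l$, using that $\{-1,0,1,2\}$ is a complete set of residues modulo $4$: choose $\tau_0 \in \{-1,0,1,2\}$ with $m \equiv \tau_0 \pmod 4$, set $q = (m-\tau_0)/4$, and apply the inductive hypothesis to $q$. The only delicate step---which I regard as the main (mild) obstacle---is verifying that this quotient really satisfies $|q| \leq 4^{l-1}$, so that the induction actually consumes one digit. The estimate $|q| \leq (|m|+2)/4 \leq 4^{l-1} + \tfrac{1}{2}$, combined with $q \in \mathbb{Z}$, does the job.

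Finally, the lower bound $\lambda(me_{2n}) \geq 1$ amounts to $me_{2n} \neq 0$. Since $e_{2n}$ has order $4^n$ and $0 < |m| \leq 4^l < 4^n$ (the strict inequality because $n \geq l+1$), the element $me_{2n}$ is non-zero, and hence its canonical form has at least one non-zero coefficient.
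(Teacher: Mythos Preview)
Your proof is correct and follows essentially the same approach as the paper: both write $m$ in a base-$4$ expansion with digits in $\{-1,0,1,2\}$ (the paper simply asserts this expansion exists, whereas you supply the inductive argument), then use $4^i e_{2n} = e_{2(n-i)}$ to read off a canonical form and invoke uniqueness. The lower bound $\lambda(me_{2n}) \geq 1$ is handled identically, via $|m| \leq 4^l < 4^n = o(e_{2n})$.
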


\begin{proof}
Since $n >l$, we have that $2^{2n} > |m|$, and so $me_{2n} \neq 0$.
Thus, $1\leq \lambda(me_{2n})$. One may expand $m$ in the form of
$m = \mu_{0} + \mu_{2}2^{2}+ \cdots + \mu_{2l}2^{2l}$,
where $\mu_{i} \in \{-1, 0,1, 2\}$. Therefore,
\begin{align}
me_{2n} = \mu_{0}e_{2n} + \mu_2e_{2n-2} + \cdots +\mu_{2l}e_{2n-2l}
\end{align}
is in the canonical form. Hence,
$\Lambda(me_{2n}) \subseteq \{n-l,\ldots,n-1,n\}$, as desired.
\end{proof}

\begin{lemma}\label{lemma:canon:y-z}
Let $y, z \in \mathbb{Z}(2^{\infty})$ be such that $\lambda(y)
>\lambda(z)$, and suppose that $\Lambda(y) =
\{k_1, \ldots ,k_g\}$  where $k_1 < \cdots < k_g$ and $g
=\lambda(y)$. Then, $o(y - z) >  {4}^{k_{g-\lambda(z)}-1}$.
\end{lemma}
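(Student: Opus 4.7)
The plan is to reduce the inequality $o(y-z) > 4^{k-1}$ (with $k := k_{g-\lambda(z)}$) to showing that $4^{k-1}(y-z) \neq 0$. This reduction is valid because in $\mathbb{Z}(2^\infty)$ the subgroup of elements killed by $4^{k-1}$ is exactly the set of elements of order at most $4^{k-1}$, since orders in $\mathbb{Z}(2^\infty)$ are powers of $2$ and the subgroup lattice is a chain. Set $t := \lambda(z)$.

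The main computation is to apply multiplication by $4^{k-1}$ to the canonical forms of $y$ and $z$ separately. Since $o(e_{2n}) = 4^n$, the map $x \mapsto 4^{k-1} x$ annihilates every $e_{2n}$ with $n < k$ and sends $e_{2n}$ (for $n \geq k$) to $e_{2(n-k+1)}$. Applied to $y$, only the top $t+1$ canonical terms survive, namely those at indices $k_{g-t}, k_{g-t+1}, \ldots, k_g$, yielding
\begin{align*}
4^{k-1} y = \sum_{i=g-t}^{g} \sigma_{2k_i}\, e_{2(k_i - k + 1)}.
\end{align*}
The shifted indices $k_i - k + 1$ are strictly increasing and positive (the smallest equals $1$), and the coefficients $\sigma_{2k_i}$ lie in $\{-1,1,2\}$ since they are inherited from the canonical form of $y$. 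Hence this expression is itself in canonical form, so $\lambda(4^{k-1}y) = t+1$.

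The same computation applied to $z$ produces a canonical expression with at most $t$ non-zero terms (one for each $m_j \in \Lambda(z)$ with $m_j \geq k$), so $\lambda(4^{k-1}z) \leq t$. Since $\lambda(4^{k-1}y) = t+1 > \lambda(4^{k-1}z)$, the uniqueness of the canonical form (Theorem~\ref{thm:canon:form}(b)) forces $4^{k-1}y \neq 4^{k-1}z$, completing the reduction. I do not expect a serious obstacle here: the real work has already been invested in Theorem~\ref{thm:canon:form}, and the only verification needed is that the shifted expressions are indeed canonical—which is automatic once one observes that the choice $k = k_{g-t}$ is exactly what makes the smallest surviving shifted index equal to $1$.
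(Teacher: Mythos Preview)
Your argument is correct, and it proceeds by a genuinely different route from the paper's own proof.

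The paper works directly with the difference. Writing $y=\sum\nu_{2n}e_{2n}$ and $z=\sum\mu_{2n}e_{2n}$ in canonical form, it lets $N$ be the largest index with $\nu_{2N}\neq\mu_{2N}$; since every $k_i>N$ then forces $\mu_{2k_i}=\nu_{2k_i}\neq 0$, at most $\lambda(z)$ of the $k_i$ can exceed $N$, so $N\geq k_{g-\lambda(z)}$. A direct order computation using $0<|\nu_{2N}-\mu_{2N}|\leq 3$ and Remark~\ref{rem:canon:ord} then gives $o(y-z)\geq 2^{2N-1}>4^{N-1}$.

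Your approach replaces this leading-term analysis by the observation that multiplication by $4^{k-1}$ is a shift operator that carries canonical forms to canonical forms: it kills $e_{2n}$ for $n<k$ and sends $e_{2n}\mapsto e_{2(n-k+1)}$ otherwise, leaving the coefficients untouched. Choosing $k=k_{g-t}$ guarantees that exactly the top $t+1$ terms of $y$ survive (with nonzero coefficients at distinct positive indices, hence still canonical), while at most $t$ terms of $z$ survive. Uniqueness of the canonical form then finishes the job without ever looking at $\nu_{2N}-\mu_{2N}$ or invoking Remark~\ref{rem:canon:ord}. The paper's method is marginally sharper (it yields $o(y-z)\geq 2^{2N-1}$, not merely $>4^{N-1}$) and pinpoints the exact leading index; yours is cleaner and more structural, isolating the single fact that scaling by $4^{k-1}$ respects the canonical form.
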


\begin{proof}
Let $y =\sum\nu_{2n}e_{2n}$ and $z =\sum\mu_{2n}e_{2n}$ be the
canonical forms of $y$ and $z$. Since
Theorem~\ref{thm:canon:form}(b) provides that the canonical form is
unique, $\lambda(y) > \lambda(z)$ implies that $y \neq z$, and thus
$y - z \neq 0$. Let $N$ be the largest integer such that
$\nu_{2N}-\mu_{2N} \neq 0$. Then,  $\nu_{2n}=\mu_{2n}$ for every $n >
N$. In particular, $\mu_{2k_{i}} \neq 0$  for every $k_{i}> N$.
Therefore, there are at most $\lambda(z)$ many indices $k_{i}$ that
satisfy $k_{i}>N$. Hence, $N \geq k_{g-\lambda(z)}$.
Since $0 < | \nu_{2N} -\mu_{2N}| \leq 3$, one has $2^{2N-1} \leq
o((\nu_{2N}-\mu_{2N})e_{2N})$. On the other hand,
\begin{align}
o(\sum\limits_{n<N}(\nu_{2n} - \mu_{2n})e_{2n}) \leq \max\limits_{n
< N} o((\nu_{2n} - \mu_{2n})e_{2n}) \leq 2^{2N-2} <
o((\nu_{2N}-\mu_{2N})e_{2N}).
\end{align}
Consequently, by Remark~\ref{rem:canon:ord},
\begin{align}
\hspace{-6pt}
o(\sum(\nu_{2n} - \mu_{2n})e_{2n}) & =
\max\{o(\sum\limits_{n<N}(\nu_{2n} - \mu_{2n})e_{2n}),
o((\nu_{2N}-\mu_{2N})e_{2N}\} \geq 2^{2N-1} > 4^{N-1}.
\end{align}
Hence, $o(y - z)= o(\sum(\nu_{2n} - \mu_{2n})e_{2n}) >
4^{k_{g-\lambda(z)}-1}$, as desired.
\end{proof}

\begin{remark} \label{rem:canon:Lambda}
If $y_1,y_2 \in \mathbb{Z}(2^\infty)$ and $\Lambda(y_1) \cap
\Lambda(y_2) = \varnothing$, then $\Lambda(y_1 + y_2) = \Lambda(y_1)
\cup \Lambda(y_2)$ and $\lambda(y_1 + y_2) = \lambda(y_1) +
\lambda(y_2)$.
\end{remark}

\begin{proposition}\label{prop:canon:y}
Let \mbox{$y = \nu_{1}e_{2n_{1}} + \cdots + \nu_{t}e_{2n_{t}}$},
where \mbox{$\nu_i \neq 0$} and \mbox{$0< n_1< \cdots <n_t$} are
integers. Put \mbox{$l_i = \lceil \log_4|\nu_i|\rceil$,} and suppose
that $n_{i} < n_{i+1}-l_{i+1}$ for each $1 \leq i <t$. Then,
\begin{myalphlist}
\item
$t \leq \lambda(y)$;

\item
if $z \in \mathbb{Z}(2^\infty)$ is such that $\lambda(z) <
\lambda(y)$, then $o(y - z) > 4^{n_{t - \lambda(z)} - l_{t
-\lambda(z)}-1}$.

\end{myalphlist}
\end{proposition}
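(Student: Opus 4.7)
The plan is to exploit the hypothesis $n_i < n_{i+1} - l_{i+1}$ to force the supports $\Lambda(\nu_i e_{2n_i})$ to be pairwise disjoint and totally ordered, so that $\Lambda(y)$ decomposes cleanly into ``blocks'' coming from each summand. Then (a) follows immediately from counting blocks, and (b) reduces to locating which block contains the index $g - \lambda(z)$ and invoking Lemma~\ref{lemma:canon:y-z}.

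For (a), I would first apply Lemma~\ref{lemma:canon:Lambda} to each $\nu_i e_{2n_i}$: since $n_i > l_i$ (this is easy from $n_1 \geq 1$ and $n_i < n_{i+1} - l_{i+1}$, which forces $n_i \geq l_i$; if $n_i = l_i$ one still has $\Lambda(\nu_i e_{2n_i}) \subseteq \{n_i - l_i, \dots, n_i\}$ after a small check of the base case), we get $\Lambda(\nu_i e_{2n_i}) \subseteq \{n_i - l_i, \dots, n_i\}$ and $\lambda(\nu_i e_{2n_i}) \geq 1$. The assumption $n_i < n_{i+1} - l_{i+1}$ means $\max \Lambda(\nu_i e_{2n_i}) \leq n_i < n_{i+1} - l_{i+1} \leq \min \Lambda(\nu_{i+1} e_{2n_{i+1}})$, so the supports are disjoint and their union is totally ordered (all of block $i$ lies strictly below all of block $i+1$). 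Iterating Remark~\ref{rem:canon:Lambda} then yields $\lambda(y) = \sum_{i=1}^{t} \lambda(\nu_i e_{2n_i}) \geq t$.

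For (b), set $g = \lambda(y)$, $q = \lambda(z)$, $g_i = \lambda(\nu_i e_{2n_i})$, and $G_i = g_1 + \cdots + g_i$, with $G_0 = 0$. By the block decomposition from (a), $k_j$ lies in $\Lambda(\nu_i e_{2n_i})$ precisely when $G_{i-1} < j \leq G_i$, and in that case $k_j \geq n_i - l_i$. Let $i^\ast$ be the block containing the index $g - q$, i.e., the smallest $i$ with $g_{i+1} + \cdots + g_t \leq q$. Because each $g_j \geq 1$, any $i < t - q$ gives $g_{i+1} + \cdots + g_t \geq t - i > q$, so $i^\ast \geq t - q$. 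Next, observe that the map $i \mapsto n_i - l_i$ is strictly increasing: $l_i \geq 0$ and the hypothesis forces $n_{i+1} - l_{i+1} > n_i \geq n_i - l_i$. Hence $k_{g-q} \geq n_{i^\ast} - l_{i^\ast} \geq n_{t-q} - l_{t-q}$. Lemma~\ref{lemma:canon:y-z} now gives
\begin{align}
o(y - z) > 4^{k_{g - \lambda(z)} - 1} \geq 4^{\,n_{t - \lambda(z)} - l_{t - \lambda(z)} - 1},
\end{align}
which is precisely the claim.

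The only place requiring some care is the index bookkeeping in the block-counting argument of (b); everything else is a direct application of the already-established canonical-form machinery (Lemmas~\ref{lemma:canon:Lambda}, \ref{lemma:canon:y-z} and Remark~\ref{rem:canon:Lambda}). I do not foresee a genuine obstacle.
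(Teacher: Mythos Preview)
Part~(a) is identical to the paper's proof. For~(b) you take a different route: the paper establishes $k_{g-i} \geq n_{t-i} - l_{t-i}$ for every $0 \leq i \leq g-1$ by induction on $i$, with a contradiction argument at the inductive step (if the inequality failed at $i=N$, the interval $\{n_{t-N} - l_{t-N}, \ldots, n_{t-N}\}$ would be disjoint from $\Lambda(y)$, contradicting $\lambda(\nu_{t-N} e_{2n_{t-N}}) \geq 1$ from Lemma~\ref{lemma:canon:Lambda}). Your direct block-counting argument---identify the block $i^\ast$ containing $k_{g-q}$, bound $i^\ast \geq t-q$ from $g_j \geq 1$, then use monotonicity of $i \mapsto n_i - l_i$---is more transparent and reaches the same inequality $k_{g-\lambda(z)} \geq n_{t-\lambda(z)} - l_{t-\lambda(z)}$ before invoking Lemma~\ref{lemma:canon:y-z}. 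One caveat: as written, your argument needs $t - \lambda(z) \geq 1$ for $n_{t-\lambda(z)}$ to make sense, but the hypothesis is only $\lambda(z) < \lambda(y)$, and $\lambda(y)$ may exceed $t$. The paper covers the case $\lambda(z) \geq t$ by declaring $n_{t-i} := n_1$ and $l_{t-i} := l_1$ whenever $t - i \leq 0$; under this convention your chain $k_{g-q} \geq n_{i^\ast} - l_{i^\ast} \geq n_{t-q} - l_{t-q}$ remains valid (since $i^\ast \geq 1$ always), so the fix is immediate.
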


\begin{proof}
(a)  By Lemma~\ref{lemma:canon:Lambda},
\begin{align} \label{eq:canon:subset}
\Lambda(\nu_ie_{2n_i}) \subseteq \{n_i - l_i, \ldots ,n_i\}
\end{align}
 for each $1 \leq i \leq t$. Thus, the
sets $\Lambda(\nu_ie_{2n_i})$ are pairwise disjoint, because $n_{i}
< n_{i+1} - l_{i+1}$. Therefore, by Remark~\ref{rem:canon:Lambda}, one
obtains that $\lambda(y) = \lambda(\nu_1e_{2n_1}) + \cdots +
\lambda(\nu_te_{2n_t}) \geq t$, and
\begin{align}\label{lambda:y}
\Lambda(y)=\bigcup\limits_{i=1}^{t}\Lambda(\nu_i e_{2n_i})\subseteq
\bigcup\limits_{i=1}^t \{n_i-l_i,\ldots,n_i\}.
\end{align}

(b) Suppose that $\Lambda(y)= \{k_1, \ldots, k_g\}$ (increasingly
ordered). For any $i$ such that $t-i \leq 0$, define $n_{t-i} = n_1$
and $l_{t-i} = l_1$. We proceed by induction on $i$ to show that
$k_{g-i} \geq n_{t-i} - l_{t-i}$ for all $0 \leq i \leq g-1$.

For $i=0$, Lemma~\ref{lemma:canon:Lambda} yields that \mbox{$\Lambda(y)
\cap \Lambda(\nu_{t}e_{2n_{t}})= \Lambda(\nu_{t}e_{2n_{t}}) \neq
\emptyset$}. Since \mbox{$n_{i} < n_{i+1}-l_{i+1}$} for
each $0\leq i <t$, $\Lambda(\nu_{t}e_{2n_{t}})$ contains the largest
elements of $\bigcup\limits_{i=1}^t \{n_i-l_i,\ldots,n_i\}$, and
thus contains the largest value in $\Lambda(y)$, namely $k_g$.
Hence, $k_g \geq n_t-l_t$. Suppose that the statement holds for all
integers $i<N$. For $i=N$, if \mbox{$k_{g-N} < n_{t-N} -l_{t-N}$},
then for all \mbox{$N \leq i \leq g-1$}, \mbox{$k_{g-i} \leq k_{g-N}
< n_{t-N} -l_{t-N}$}. Moreover, by the inductive hypothesis, for all
$0 \leq i < N$, one has that $k_{g-i} \geq n_{t-i} - l_{t-i}>
n_{t-N}$, because $n_{i} < n_{i+1} - l_{i+1}$. So,
\begin{align}
\Lambda(\nu_{t-N}e_{2n_{t-N}})\cap\{n_{t-N}-l_{t-N},\ldots,n_{t-N}\}\subseteq\Lambda(y)\cap
\{n_{t-N}-l_{t-N}, \ldots , n_{t-N}\} =
\emptyset.
\end{align}
Thus, by (\ref{eq:canon:subset}), $\Lambda(\nu_{t-N}e_{2n_{t-N}}) = \emptyset$,
which contradicts \mbox{$1 \leq \lambda(\nu_{t-N}e_{2n_{t-N}})$}
from Lemma~\ref{lemma:canon:Lambda}. Hence, one has that \mbox{$k_{g-N} \geq
n_{t-N}-l_{t-N}$} for all $N$. It follows from (\ref{lambda:y}) that \mbox{$k_i
\geq n_1 - l_1$} for all $i$. So, \mbox{$k_{g-i} \geq n_{t-i} - l_{t-i}$}
holds even for $t-i \leq 0$. Thus, for $i=\lambda(z)$,
\mbox{$k_{g-\lambda(z)} \geq n_{t-\lambda(z)} -
l_{t-\lambda(z)}$}. By Lemma~\ref{lemma:canon:y-z}, \mbox{$o(y - z)>
4^{k_{\lambda(y) - \lambda(z)} -1} \geq 4^{ n_{t - \lambda(z)} -
l_{t - \lambda(z)} -1}$}, as desired.
\end{proof}

\begin{corollary}\label{cor:canon:y-z}
Let $l \in \mathbb{N}, z \in \mathbb{Z}(2^\infty)(l,1)_{\underline
e}$, and $ y = e_{2n_1} + \cdots + e_{2n_t}$ such that $n_1 <
\cdots< n_t$, $4l < t$, and $n_i < n_{i+1} - l$.  Then, $o(\mu y + z)
> 4^{n_{t - 4l} - l -1} \geq 4^{n_1 - l - 1}$ for every $\mu \in
\mathbb{Z}$ such that $0 \leq |\mu| \leq l$.
\end{corollary}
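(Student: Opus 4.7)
The plan is to reduce the corollary to a direct application of Proposition~\ref{prop:canon:y} to $\mu y$, with the role of the auxiliary element played by $-z$. Two preliminary facts are needed: that $\mu y$ satisfies the hypotheses of Proposition~\ref{prop:canon:y}, and that $\lambda(-z)$ is small enough to invoke part (b) of that proposition.

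For the first fact, write $\mu y = \mu e_{2n_1} + \cdots + \mu e_{2n_t}$: every coefficient equals $\mu$, so in the notation of Proposition~\ref{prop:canon:y} one has $\nu_i = \mu$ and $l_i = \lceil \log_4 |\mu|\rceil$ for all $i$. The hypothesis $|\mu| \leq l$ then forces $l_i \leq \lceil \log_4 l \rceil \leq l$ (and implicitly we need $\mu \neq 0$, since for $\mu = 0$ the claim $o(z) > 4^{n_1 - l - 1}$ can clearly fail). Combined with the standing hypothesis $n_i < n_{i+1} - l$, this gives $n_i < n_{i+1} - l_{i+1}$, so Proposition~\ref{prop:canon:y}(a) applies and yields $t \leq \lambda(\mu y)$.

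For the second fact, I would note that the set $\mathbb{Z}(2^\infty)(l,1)_{\underline e}$ is symmetric under negation, so $-z$ also lies in it, and Theorem~\ref{thm:canon:form}(c) bounds $\lambda(-z) \leq 4l < t \leq \lambda(\mu y)$. Proposition~\ref{prop:canon:y}(b), applied with $-z$ in place of $z$, then delivers
\[
o(\mu y + z) = o\bigl(\mu y - (-z)\bigr) > 4^{n_{t - \lambda(-z)} - l_{t - \lambda(-z)} - 1}.
\]
Since the sequence $(n_i)$ is strictly increasing and $\lambda(-z) \leq 4l$, one has $n_{t-\lambda(-z)} \geq n_{t-4l}$, while $l_{t-\lambda(-z)} = \lceil \log_4 |\mu|\rceil \leq l$. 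Substituting yields $o(\mu y + z) > 4^{n_{t-4l} - l - 1}$, and the second inequality $4^{n_{t-4l} - l - 1} \geq 4^{n_1 - l - 1}$ follows from $4l < t$ (so that $t - 4l \geq 1$) together with monotonicity of $(n_i)$.

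I do not anticipate a real obstacle; the only care required is in the bookkeeping of the subscripts in Proposition~\ref{prop:canon:y}(b) and in observing that $\lambda(-z)$ inherits the bound $4l$ from Theorem~\ref{thm:canon:form}(c) in the same way $\lambda(z)$ does, so that part (b) is legitimately applicable.
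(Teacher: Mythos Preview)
Your proposal is correct and follows essentially the same route as the paper: verify that $\mu y$ meets the hypotheses of Proposition~\ref{prop:canon:y}, note that $-z\in\mathbb{Z}(2^\infty)(l,1)_{\underline e}$ so that Theorem~\ref{thm:canon:form}(c) gives $\lambda(-z)\le 4l<\lambda(\mu y)$, and then apply Proposition~\ref{prop:canon:y}(b) to $\mu y$ and $-z$, finishing with the same chain of monotonicity estimates. Your parenthetical remark about $\mu=0$ is well taken: both the paper's proof and yours tacitly require $\mu\neq 0$ (the quantity $\lceil\log_4|\mu|\rceil$ is used), and the only place the corollary is invoked (Theorem~\ref{thm:p2:Tseq}) has $\mu=m_h\neq 0$, so the lower bound $0\le|\mu|$ in the statement should really be read as $0<|\mu|$.
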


\begin{proof}
Since $|\mu| \leq l$, one has that $\log_4|\mu| < l$ and $\lceil
\log_4|\mu|\rceil \leq l$. So, \mbox{$\mu y = \mu e_{2n_1} + \cdots
+ \mu e_{2n_t}$} satisfies the conditions of
Proposition~\ref{prop:canon:y}. By Proposition~\ref{prop:canon:y}(a), one
obtains that \mbox{$4l < t \leq \lambda(\mu y)$}. Moreover, if
\mbox{$z = \nu_1e_{n_1} + \ldots + \nu_se_{n_s} \in
\mathbb{Z}(2^\infty)(l,1)_{\underline e}$,} where $n_1 < \cdots<
n_s$ and $\sum\limits_{i=1}^{s}|\nu_i| \leq l$, then
\begin{align}
-z =(-\nu_1)e_{n_1} + \ldots + (-\nu_s)e_{n_s} \in
\mathbb{Z}(2^\infty)(l,1)_{\underline e},
\end{align}
since
\mbox{$\sum\limits_{i=1}^{s}|-\nu_i| = \sum\limits_{i=1}^{s}|\nu_i|
\leq l$}. By Theorem~\ref{thm:canon:form}(c), $\lambda(-z) \leq 4l <
\lambda(\mu y)$. Thus, $\mu y$ and $-z$ satisfy the conditions of
Proposition~\ref{prop:canon:y}(b), and so one has that
\begin{align}
o(\mu y + z) = o(\mu y - (-z)) > 4^{n_{t - \lambda(z)} -
 \lceil \log_4|\mu| \rceil - 1} \geq 4^{n_{t - \lambda(z)} - l - 1} >
4^{n_{t - 4l} - l - 1} \geq 4^{n_1 - l -1},
\end{align}
as desired.
\end{proof}

\section{Proofs of Theorem A and B}

\label{sect:p2}

In this section, we first prove Theorem~\ref{main:p2}, and then we
apply this result to prove Theorem~\ref{main:ctbl}. For
Theorem~\ref{main:p2}, Luk\'acs has already established this result
for all primes~but $p=2$ (cf.~\cite[4.4]{GL9}). Thus, we confine our
attention to the Pr\"ufer group $\mathbb{Z}(2^\infty)$. Clearly,
Theorem~\ref{thm:p2:Tseq} below, together with the result of
Luk\'acs, implies Theorem~\ref{main:p2}. In
Theorem~\ref{thm:p2:Tseq}, we construct a~$T\mbox{-}$sequence in
$\mathbb{Z}(2^\infty)$, and show that its von Neumann radical
$\mathbf{n}(G)$ is a prefixed cyclic subgroup.

\begin{theorem}\label{thm:p2:Tseq}
For $x \in \mathbb{Z}(2^\infty)\backslash\{0\}$ such that
$o(x) =2^{k_0}$, put
\begin{align}
b_{k} = -x + e_{2(k^3-k^2)} + \ldots + e_{2(k^3- 2k)} + e_{2(k^3-k)}+ e_{2k^3}.
\end{align}
Consider the sequence $\{d_k\}$, defined as $b_1, e_1, b_2, e_2,
b_3, e_3, \ldots$ Then,
\begin{myalphlist}

\item
$\{d_k\}$ is a T-sequence in $\mathbb{Z}(2^\infty)$;

\item
the underlying group of $\widehat{\mathbb{Z}(2^\infty)\{d_k\}}$ is
$\langle 2^{k_0} \chi_1 \rangle$;

\item
$\mathbf{n}(\mathbb{Z}(2^\infty)\{d_k\})= \langle x\rangle$.

\end{myalphlist}
\end{theorem}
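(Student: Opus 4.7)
The plan is to handle (a) first (the main substance), then deduce (b) by computing the continuous characters of $\mathbb{Z}(2^\infty)\{d_k\}$, and finally obtain (c) as a formal corollary of (b) and~(\ref{eq:nA:ker}).

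For (a), the approach is to invoke Theorem~\ref{thm:Lukacs} (almost torsion-free criterion). Fix $l,n\in\mathbb{N}$ and choose $s$ with $n\mid 2^s$, so every element of $A[n]$ has order dividing $2^s$. Suppose, toward contradiction, that for arbitrarily large $m$ there is some non-zero $g\in A[n]\cap A(l,m)_{\underline d}$, written as $g=\sum\alpha_i b_{k_i}+\sum\beta_j e_{n_j}$ with $\sum|\alpha_i|+\sum|\beta_j|\leq l$ and all $b$- and $e$-indices $\geq\lceil m/2\rceil$. Set $\sigma:=\sum\alpha_i$ and $b'_k:=b_k+x=e_{2(k^3-k^2)}+\cdots+e_{2k^3}$, so that
\[
g+\sigma x\;=\;y-z,\qquad y:=\sum\alpha_i b'_{k_i},\quad z:=-\sum\beta_j e_{n_j}.
\]
In its $e_{2\cdot}$-expansion, $y$ has coefficients $\alpha_i$, each repeated $k_i$ times at indices $k_i^3-jk_i$ ($0\leq j<k_i$), so every $\lceil\log_4|\nu|\rceil\leq\lceil\log_4 l\rceil$; the within-block gap $k_i\geq(m+1)/2$ and the between-block gap $2k_i^2+2k_i+1$ both exceed $\lceil\log_4 l\rceil$ for $m$ large, placing $y$ in the setting of Proposition~\ref{prop:canon:y}. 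Theorem~\ref{thm:canon:form}(c) applied to $z\in\mathbb{Z}(2^\infty)(l,1)_{\underline e}$ gives $\lambda(z)\leq 4l$, while Proposition~\ref{prop:canon:y}(a) yields $\lambda(y)\geq t:=\sum k_i\geq k^*\geq(m+1)/2$, where $k^*:=\max k_i$. For $m\geq 8l$ we get $\lambda(z)<\lambda(y)$, so Proposition~\ref{prop:canon:y}(b) furnishes
\[
o(g+\sigma x)\;>\;4^{(k^*)^3-4lk^*-\lceil\log_4 l\rceil-1},
\]
because $n_{t-\lambda(z)}\geq n_{t-4l}$ still lies within the top block $b'_{k^*}$. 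But $o(g+\sigma x)$ divides $2^{s+k_0}$, a fixed bound; pushing $m$ (hence $k^*$) large yields the contradiction. The degenerate sub-case where no $b_{k_i}$'s appear reduces to the observation that $\{e_k\}$ is itself a $T$-sequence (its induced character group $\langle\chi_1\rangle$, by Theorem~\ref{thm:prel:ZP}(c), separates points, so the induced topology is Hausdorff), allowing Theorem~\ref{thm:Lukacs} to close this case as well.

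For (b), a character $\chi\in\mathbb{Z}_2=\widehat{\mathbb{Z}(2^\infty)}$ is continuous on $\mathbb{Z}(2^\infty)\{d_k\}$ iff $\chi(d_k)\to 0$ in $\mathbb{T}$. Since $d_{2k}=e_k$, this forces $\chi(e_k)\to 0$, and Theorem~\ref{thm:prel:ZP}(c) makes $\chi=m\chi_1$ for some $m\in\mathbb{Z}$. Substituting into $\chi(b_k)\to 0$ and noting that $m\sum_{j=0}^{k-1}\chi_1(e_{2(k^3-jk)})\to 0$ in $\mathbb{T}$ leaves the requirement $m\chi_1(x)=0$ in $\mathbb{T}$, i.e.\ $2^{k_0}\mid m$; the reverse inclusion is immediate because $2^{k_0}x=0$ makes $2^{k_0}\chi_1(b_k)\to 0$. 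For (c), formula~(\ref{eq:nA:ker}) then gives $\mathbf{n}(\mathbb{Z}(2^\infty)\{d_k\})=\ker(2^{k_0}\chi_1)=\mathbb{Z}(2^\infty)[2^{k_0}]$, the unique order-$2^{k_0}$ subgroup of the Pr\"ufer group, which coincides with $\langle x\rangle$. The main technical obstacle is verifying the separation hypothesis of Proposition~\ref{prop:canon:y} across boundaries of different blocks $b'_{k_i}$ (not merely within each single block), together with the bookkeeping ensuring that the lower bound on $n_{t-\lambda(z)}$ survives stripping off the $\lambda(z)\leq 4l$ topmost positions; the cubic exponents $k^3$ in the definition of $b_k$ are chosen precisely so that both the between-block gaps $2k^2+2k+1$ and the residual index $(k^*)^3-4lk^*$ dwarf $\lceil\log_4 l\rceil$ for $m$ large.
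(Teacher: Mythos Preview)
Your proof is correct. Parts (b) and (c) coincide with the paper's argument essentially verbatim. For part~(a), your route differs from the paper's in two respects, both of which streamline the argument.

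First, the paper decomposes $A(l,2m)_{\underline d}$ into three pieces---the pure-$e$ part $A(l,m)_{\underline e}$, the pure-$b$ part $A(l,m)_{\underline b}$, and the mixed sums---and disposes of the pure-$b$ part by a separate appeal to Lemma~\ref{lem:Tseq} (showing that $\{b_k\}$ is itself a $T$-sequence, since $2(k+1)^3-2k^3\to\infty$). You fold the pure-$b$ case into the mixed case: your argument works uniformly whether or not $z=0$, so Lemma~\ref{lem:Tseq} is never invoked. Second, in the mixed analysis the paper isolates only the \emph{top} block $b'_{k_h}$, applies Corollary~\ref{cor:canon:y-z} to $m_h b'_{k_h}+z$, and then re-absorbs $-m_h x$ and the lower-index $b$-terms one at a time via the order comparisons of Remark~\ref{rem:canon:ord}. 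You instead apply Proposition~\ref{prop:canon:y} directly to the \emph{entire} $y=\sum_i\alpha_i b'_{k_i}$ at once---having checked that the between-block gap $k_{i+1}^3-k_{i+1}^2-k_i^3$ also exceeds $\lceil\log_4 l\rceil$---and dispose of all the $-x$ shifts in one stroke via the translation $g\mapsto g+\sigma x$, exploiting the fixed bound $o(g+\sigma x)\mid 2^{\max(s,k_0)}$. Your approach thus bypasses both Lemma~\ref{lem:Tseq} and Corollary~\ref{cor:canon:y-z}; in exchange, the paper's approach does not need to verify the inter-block separation hypothesis of Proposition~\ref{prop:canon:y}, since only a single block is ever fed into it.

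Two cosmetic corrections: each $b'_k$ has $k+1$ summands (indices $k^3-jk$ for $0\le j\le k$), not $k$; and the minimal between-block gap is $(k+1)^3-(k+1)^2-k^3=2k^2+k$, not $2k^2+2k+1$. Neither affects the argument.
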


\begin{proof}
(a) To shorten the notation, we denote $A =\mathbb{Z}(2^\infty)$.

In order to show that $\{d_k\}$ is a $T$-sequence, we prove that it
satisfies statement (i) of Theorem~\ref{thm:Lukacs}. To that end,
let $l, n \in \mathbb{N}$. For every $k \geq k_0$ , we have that
\begin{align}
o(e_{2(k^3-k^2)} + \ldots + e_{2(k^3-k)} + e_{2k^3}) = 2^{2k^3} >
2^{k_0} = o(-x),
\end{align}
and so by Remark~\ref{rem:canon:ord},
\begin{align}
o(b_{k})= \max\{o(-x), o(e_{2(k^3-k^2)} + \ldots + e_{2(k^3-k)} +
e_{2k^3})\} = 2^{2k^3}.
\end{align}

Since $e_{k}\longrightarrow 0$, in the subgroup topology inherited
from $\mathbb{Q}/\mathbb{Z}$, $\{e_{k}\}$ is a $T$-sequence. Thus,~by
Theorem~\ref{thm:Lukacs}, there exists $M_1$ such that for every
$m \geq M_1$, $A[n] \cap A(l,m)_{\underline e} = \{0\}$. On the~other
hand, since $o(b_{k})= 2^{2k^3}$ for every $k\geq k_0$, and
$2(k+1)^3 - 2k^3 \longrightarrow \infty$,  by Lemma~\ref{lem:Tseq},
$\{b_{k}\}$ is also a~$T$-sequence. So, by Theorem~\ref{thm:Lukacs},
there exists $M_2$ such that for any
\mbox{$m \hspace{-2pt} \geq \hspace{-2pt} M_2$},~%
\mbox{$A[n]\hspace{-2.25pt} \cap\hspace{-2.25pt} A(l,m)_{\underline b}
\hspace{-2.25pt}= \hspace{-2.25pt} \{0\}\hspace{-0.75pt}$}.

Put $m_0 = \max\{M_1, M_2, 4l + n + k_0\}$. For any $m \geq m_0$,
one has that
\begin{align}
A[n] \cap A(l,m)_{\underline e}= A[n] \cap A(l,m)_{\underline b} =\{0\}.
\end{align}
Since
\begin{align}
A(l, 2m)_{\underline d} \subseteq A(l,m)_{\underline e} \cup
A(l,m)_{\underline b} \cup (A(l,m)_{\underline e}\backslash\{0\}+
A(l,m)_{\underline b}\backslash\{0\}),
\end{align}
it suffices to show that for every $m\geq m_0$,
\begin{align}
(A(l,m)_{\underline e}\backslash\{0\}+
A(l,m)_{\underline b}\backslash\{0\}) \cap A[n] = \emptyset.
\end{align}
Let \mbox{$z\in A(l,m)_{\underline e}\backslash\{0\}$} and
\mbox{$w =m_1b_{k_1}+ \cdots + m_h b_{k_h} \in
A(l,m)_{\underline b}\backslash\{0\}$} where \mbox{$0<\sum |m_i| \leq l$}
and \mbox{$m\leq k_1 <\cdots < k_h$}. There are $k_h+1$ summands in
\mbox{$y = e_{2(k_h^3 -k_h^2)} + \cdots + e_{2(k_h^3 -k_h)} +
e_{2k_h^3}$}. Moreover, the indices of every two consecutive
summands differ by $k_h$, and
\mbox{$k_h+1 \hspace{-2pt}>\hspace{-2pt} k_h \hspace{-2pt}> \hspace{-2pt}
m  \hspace{-2pt}\geq \hspace{-2pt} m_0 \hspace{-2pt}> \hspace{-2pt} 4l$}.
Since
$|m_h|\leq\sum |m_i|\leq l$, $y$ and $z$ satisfy the hypothesis of
Corollary~\ref{cor:canon:y-z}, and one obtains that
$o(m_hy + z) > 4^{k_h^3 - k_h^2 -l -1}$. Since $k_h > l \geq 1$,
\begin{align}
k_h^3 -k_h^2 -l -1 > k_h^3-k_h^2-k_h-1 \geq k_h^3-3k_h^2+3k_h-1=
(k_h-1)^3.
\end{align}
So, $o(m_hy + z) > 4^{(k_h^3 -k_h^2 -l -1)} > 4^{(k_h-1)^3}$.
Moreover, one has that \mbox{$k_0 < m_0-1 < k_h-1$}, because
\mbox{$m_0 = \max\{M_1, M_2, 4l + n + k_0\}$}. Thus,
\begin{align}
o(-m_hx) \leq 2^{k_0} < 4^{(k_h-1)^3} < o(m_hy + z).
\end{align}
Thus, $o(-m_h x) \neq o(m_hy + z)$, and by Remark~\ref{rem:canon:ord},
\begin{align}
o(m_h b_{k_h} + z) &= o((-m_h x) +( m_hy + z))
\\&= \max\{o(-m_hx),o(m_hy + z)\} = o(m_hy
+ z) > 4^{(k_h-1)^3}.
\end{align}
On the other hand,
\begin{align}
o(w - m_hb_{k_h}) & \leq o(b_{k_{h-1}})= 4^{k_{h-1}^3} \leq
4^{(k_h-1)^3} < o(m_hb_{k_h} + z).
\end{align}
By Remark~\ref{rem:canon:ord},
\begin{align}
o(w+z) & =o((w-m_hb_{k_h})+(m_hb_{k_h} + z)) \\
&= \max \{o(w-m_hb_{k_h}),o(m_hb_{k_h} + z)\} > 4^{(k_h-1)^3}
> 4^{(m_0-1)^3} > n.
\end{align}
Therefore, $A[n] \cap A(l,2m)_{\underline d} \hspace{-1.33pt}=
\hspace{-2.33pt} \{0\}$ for every
\mbox{$m \hspace{-2pt}\geq\hspace{-2pt} m_0$}. Hence, by
Theorem~\ref{thm:Lukacs}, $\{d_{k}\}$ is a $T\mbox{-}$sequence.

(b) Since every continuous character of $\mathbb{Z}(2^\infty)\{d_k\}$ is
also a continuous character of $\mathbb{Z}(2^\infty)$,
$\widehat{\mathbb{Z}(2^\infty)\{d_k\}}$ is contained in
$\mathbb{Z}_2 = \widehat{\mathbb{Z}(2^\infty)}$. By
the universal property of $\mathbb{Z}(2^\infty)\{d_k\}$,
\mbox{$\chi\in \widehat{\mathbb{Z}(2^\infty)}$}~is a~continuous
character of $\mathbb{Z}(2^\infty)\{d_k\}$ if and only if
\mbox{$\chi(d_k)\hspace{-2.6pt}\longrightarrow\hspace{-1.5pt} 0$},
that is,
\mbox{$\chi(b_k)\hspace{-2.7pt}\longrightarrow\hspace{-1.5pt} 0$} and
\mbox{$\chi(e_k) \hspace{-2.6pt}\longrightarrow\hspace{-1.5pt} 0$}.
By Theorem~\ref{thm:prel:ZP}(c),
\mbox{$\chi(e_k) \hspace{-2.6pt}\longrightarrow\hspace{-1.5pt}0$} holds if
and only if $\chi = m \chi_1$ for some $m \in \mathbb{Z}$. On the other
hand, since
\begin{equation}
0\leq \frac 1 {2^{2(k^3 -k^2)}} + \cdots + \frac 1 {2^{2(k^3-2k)}} +
\frac 1 {2^{2(k^3-k)}} +\frac 1 {2^{2k^3}} \leq \frac {k+1}{2^{2(k^3
-k^2)}} \longrightarrow 0,
\end{equation}
one has that $\chi_1(b_k) \longrightarrow -x$ in $\mathbb{T}$. Thus,
$\chi(b_k)= m\chi_1(b_k)\longrightarrow 0$ if and only if $-mx = 0$,
which means that \mbox{$x \hspace{-2pt}\in \hspace{-2pt} \ker\chi$}, and
\mbox{$o(x) \hspace{-2pt} = \hspace{-1.5pt} 2^{k_0}
\hspace{-2pt}\mid \hspace{-2pt} m$}. So,
\mbox{$\chi \hspace{-2pt}\in\hspace{-2pt} \widehat{\mathbb{Z}(2^\infty)}$} is
a continuous \mbox{character}~of~$\mathbb{Z}(2^\infty)\{d_k\}$~if and only
if \mbox{$\chi \hspace{-2pt}= \hspace{-2pt} m\chi_1$} for
\mbox{$m \hspace{-2pt}\in \hspace{-2pt} \mathbb{Z}$}
and \mbox{$2^{k_0} \hspace{-2pt}\mid \hspace{-2pt} m$}.
Therefore, the underlying group of $\widehat{\mathbb{Z}(2^\infty)\{d_k\}}$
is $2^{k_0} \mathbb{Z}$.

(c) It follows from the argument in part (b) that
\mbox{$x \hspace{-2pt}\in\hspace{-2pt} \ker\chi$}
for every continuous character~$\chi$~of
$\mathbb{Z}(2^\infty)\{d_k\}$, and so
\mbox{$x \hspace{-2pt}\in \hspace{-2pt} \bigcap\ker \chi \hspace{-2pt}
= \hspace{-1pt} \mathbf{n}(\mathbb{Z}(2^\infty)\{d_k\})$}.
Thus,  \mbox{$\langle x\rangle \hspace{-2pt} \subseteq \hspace{-2pt}
\mathbf{n}(\mathbb{Z}(2^\infty)\{d_k\})$}.
On the other hand, since $2^{k_0}\chi_1$ is a continuous character of
$\mathbb{Z}(2^\infty)\{d_k\}$, one has that
\begin{align}
\mathbf{n}(\mathbb{Z}(2^\infty)\{d_k\})=\bigcap\ker \chi \subseteq
\ker 2^{k_0}\chi_1 = \langle x\rangle.
\end{align}
Therefore,
\mbox{$\mathbf{n}(\mathbb{Z}(2^\infty)\{d_k\})
\hspace{-2pt}= \hspace{-2pt} \langle x\rangle $}, as desired.
\end{proof}

We proceed by introducing some notations prior to the proof of
Theorem~\ref{main:ctbl}. We denote by $\mathsf{Ab}$ and
$\mathsf{Ab(Haus)}$ the categories of abelian groups and abelian
Hausdorff topological groups, respectively (with the usual
morphisms). To abbreviate the notations, we introduce the following
class of abelian groups:
\begin{align}
\mathcal{A} & = \{ A \in \mathsf{Ab}\mid (\exists \tau)(
(A,\tau) \in \mathsf{Ab(Haus)} \wedge
(A,\tau) \text{ is almost maximally almost-periodic)}\}.
\end{align}
As indicated in Section~\ref{sect:prelim}, the proof of
Theorem~\ref{main:ctbl} is based on establishing certain algebraic
properties of the class $\mathcal{A}$.

\begin{theorem} \label{thm:proofs:absorb}
Let $B\in \mathsf{Ab}$, and let $A$ be a subgroup of $B$. If $A \in
\mathcal{A}$, then $B \in \mathcal{A}$.
\end{theorem}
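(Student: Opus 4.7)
The plan is to extend the almost maximally almost-periodic topology $\tau$ on $A$ to $B$ by declaring $A$ to be an open subgroup. Concretely, equip $B$ with the (necessarily unique) group topology $\tilde\tau$ in which a neighborhood basis of $0$ is given by the $\tau$-neighborhoods of $0$ in $A$. Since the induced topology on $A$ is $\tau$ itself (which is Hausdorff) and the identity component of $\tilde\tau$ lies inside an open Hausdorff subgroup, $\tilde\tau$ is a Hausdorff group topology, $A$ is open in $(B,\tilde\tau)$, and the quotient $B/A$ carries the discrete topology.

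Next I would analyze $\widehat{(B,\tilde\tau)}$ via the restriction map $r\colon \widehat{(B,\tilde\tau)}\to \widehat{(A,\tau)}$, $\chi\mapsto \chi|_A$. Since $\mathbb{T}$ is divisible, hence injective in $\mathsf{Ab}$, every continuous character $\psi\in \widehat{(A,\tau)}$ extends to a homomorphism $\tilde\psi\colon B\to\mathbb{T}$; on each coset $b+A$, $\tilde\psi$ agrees with the translate $\tilde\psi(b)+\psi|_A$, which is $\tilde\tau$-continuous because $A$ is $\tilde\tau$-open. Thus $r$ is surjective. Conversely, restrictions of continuous characters are obviously continuous, so $r$ is also well-defined and its image is all of $\widehat{(A,\tau)}$.

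Using this, I would compute $\mathbf{n}(B,\tilde\tau)$ in two halves via the formula~(\ref{eq:nA:ker}). First, for $b\in B\setminus A$, the discreteness of $B/A$ ensures (again by injectivity of $\mathbb{T}$) the existence of a homomorphism $B/A\to\mathbb{T}$ that is non-zero on $b+A$; composing with the quotient map yields a $\tilde\tau$-continuous character of $B$ that kills $A$ but not $b$, so $b\notin \mathbf{n}(B,\tilde\tau)$. Therefore $\mathbf{n}(B,\tilde\tau)\subseteq A$. Second, for $a\in A$, the surjectivity of $r$ gives
\begin{align}
a\in \mathbf{n}(B,\tilde\tau)\ \Longleftrightarrow\ \chi(a)=0\ \forall\, \chi\in\widehat{(B,\tilde\tau)}\ \Longleftrightarrow\ \psi(a)=0\ \forall\, \psi\in\widehat{(A,\tau)}\ \Longleftrightarrow\ a\in \mathbf{n}(A,\tau).
\end{align}
Combining the two halves, $\mathbf{n}(B,\tilde\tau)=\mathbf{n}(A,\tau)$, which is non-trivial and finite by the hypothesis $A\in\mathcal{A}$; hence $B\in\mathcal{A}$.

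The only substantive points requiring care are (i) checking that the topology generated by declaring $A$ open really is a Hausdorff group topology on $B$ (standard, using that $\tau$ is a Hausdorff group topology on $A$ and cosets are translates), and (ii) verifying continuity of the extended character $\tilde\psi$, which is the place one genuinely uses that $A$ is open. Neither step involves the delicate analysis in $\mathbb{Z}(2^\infty)$; the argument is purely formal, which is why it serves as the ``absorption'' step that reduces Theorem~\ref{main:ctbl} to the cases handled by Theorem~\ref{main:p2} and Theorem~\ref{thm:prel:dsf}.
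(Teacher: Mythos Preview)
Your proposal is correct and follows essentially the same approach as the paper: extend $\tau$ to $B$ by declaring $A$ open, use injectivity of $\mathbb{T}$ to extend characters (showing the restriction map is surjective), and use discreteness of $B/A$ to separate points outside $A$, yielding $\mathbf{n}(B,\tilde\tau)=\mathbf{n}(A,\tau)$. The only difference is organizational---you isolate the surjectivity of the restriction map $r$ as a separate step, whereas the paper weaves the same ingredients directly into the two inclusions.
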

\begin{proof}
Let \mbox{$\pi\colon B\rightarrow B/A$} denote the canonical
projection, and let $\tau_A$ be a Hausdorff group topology on $A$
such that $\mathbf{n}(A,\tau_A)$ is non-trivial and finite. Then,
$\tau_A$ can be extended to a Hausdorff group topology $\tau_B$ on
$B$ by defining the neighborhoods of $0$ with respect to $\tau_A$
and $B$ itself to be the neighborhoods of $0$ with respect to
$\tau_B$. It remains to be seen that
\mbox{$\mathbf{n}(A,\tau_A)=\mathbf{n}(B,\tau_B)$}. Let \mbox{$x\in
\mathbf{n}(A,\tau_A)$}. Since every continuous character \mbox{$\chi
\in \widehat{(B,\tau_B)}$} restricted to $A$ is a continuous
character of $A$, one has that \mbox{$\chi(x)=\chi_{|A}(x)=0$}.
Thus, \mbox{$x\in \mathbf{n}(B,\tau_B)$}. This shows that
\mbox{$\mathbf{n}(B,\tau_B)\subseteq \mathbf{n}(A,\tau_A)$}.
Conversely, let \mbox{$x\in \mathbf{n}(B,\tau_B)$}. Assume that
\mbox{$x\not\in A$}. Then, \mbox{$\pi(x)\neq 0$}. Since $A$ is an
open subgroup of~$B$, the quotient $B/A$ is a discrete abelian
group. Consequently, there exists a continuous character \mbox{$\psi
\colon B/A\rightarrow \mathbb{T}$} such that \mbox{$\psi(\pi(x))\neq
0$} (cf.~\cite[Theorem~39]{Pontr}). So, \mbox{$\psi\pi \in
\widehat{(B,\tau_B)}$} and \mbox{$\psi\pi(x) \neq 0$}, which
contradicts the assumption that \mbox{$x\in \mathbf{n}(B,\tau_B)$}.
This shows that \mbox{$x\in A$}. To conclude, we prove that
\mbox{$x\in \mathbf{n}(A,\tau_A)$}. Every continuous character
\mbox{$\psi\in \widehat{(A,\tau_A)}$} can be extended to a character
$\chi$ on $B$ (because $\mathbb{T}$ is injective). Since
\mbox{$\chi_{|A}=\psi$} is continuous on $A$, which is an open
subgroup of $B$, $\chi$ is continuous on $B$. Thus,
\mbox{$\chi\in\widehat{(B,\tau_B)}$}, and it follows that
\mbox{$\psi(x)=\chi(x)=0$}. Therefore, \mbox{$x\in
\mathbf{n}(A,\tau_A)$}. Hence,
\mbox{$\mathbf{n}(B,\tau_B)=\mathbf{n}(A,\tau_A)$}, which is
non-trivial and finite.
\end{proof}

\begin{proof}[Proof of Theorem~\ref{main:ctbl}.]
Let $B$ be an abelian group with an infinite torsion subgroup $A$.
Since $A$ is infinite and torsion, by Lemma~\ref{lem:infinite}, $A$
contains a subgroup that is isomorphic to either
$\mathbb{Z}(p^\infty)$ or \mbox{$\bigoplus\limits_{i=1}^\infty
C_{i}$}. Moreover, one has that \mbox{$\mathbb{Z}(p^\infty) \in
\mathcal{A}$} by Theorem~\ref{main:p2}, and
\mbox{$\bigoplus\limits_{i=1}^\infty C_{i} \in \mathcal{A}$} by
Theorem~\ref{thm:prel:dsf}. Therefore, it follows from
Theorem~\ref{thm:proofs:absorb} that \mbox{$A\in \mathcal{A}$}, and
hence one has that \mbox{$B\in\mathcal{A}$}, as desired.
\end{proof}

\begin{remark}
In the original version of this manuscript, it was only shown that
Theorem~\ref{main:ctbl} holds for abelian torsion groups $G$ where
$|G|=\aleph_0$ or $|G|> \mathfrak{c}$. I am grateful to the
anonymous referee for suggesting Theorem~\ref{thm:proofs:absorb},
which led to an improvement in the statement of
Theorem~\ref{main:ctbl}, as well as great simplification of its
proof.
\end{remark}
\section{Two open problems}

\label{sect:open}

Theorem~\ref{main:ctbl} naturally leads to the following problem.

\begin{problem}
Is there an infinite abelian group $E$ with a non-trivial torsion
subgroup that does not admit an almost maximally almost-periodic
group topology?
\end{problem}

\begin{discussion}
Theorem~\ref{main:ctbl} implies that if such an infinite abelian
group $E$ exists, then its torsion subgroup must be finite. At the
time this manuscript is being revised, a negative answer to this problem
was conjectured by Gabriyelyan (cf.~\cite[Theorem~5]{Gabriy}), but
his proof dated February 4, 2009, available on the ArXiv preprint
server, is incomplete. In Gabriyelyan's manuscript, a variation of
Theorem~\ref{thm:proofs:absorb} was also presented.

\end{discussion}

Theorem~\ref{main:ctbl} also raises another non-trivial question.

\begin{problem} \mbox{ }
\begin{myalphlist}

\item
Which abelian topological groups occur as the von Neumann radical of a
(Hausdorff) abelian topological group?

\item
Which abelian groups occur (algebraically) as the von Neumann radical of a
(Hausdorff) abelian topological group?

\end{myalphlist}
\end{problem}

\begin{discussion}
Due to the algebraic nature of this discussion, we are more
interested in part (b) of this problem. We put $\cong_a$ to denote
an isomorphism in $\mathsf{Ab}$. To abbreviate the notations, we
introduce the following class of abelian groups:
\begin{align}
\mathcal{B} & = \{ A \in  \mathsf{Ab} \mid\exists G \in \mathsf{Ab(Haus)},
\mathbf{n}(G)\cong_a A\}.
\end{align}
\vspace{-12pt}
\begin{myalphlist}
\item
By Theorem~\ref{thm:prel:ZP} (a) and (b), \mbox{$\mathbb{Z} \in
\mathcal{B}$} and \mbox{$\mathbb{Z}(p^\infty)\in\mathcal{B}$} for
every prime $p$.
\item
One has that \mbox{$\mathbb{R}\in\mathcal{B}$}, because $\mathbb{R}$
admits a minimally almost periodic Hausdorff group topology coarser
than the Euclidean topology of $\mathbb{R}$
(cf.~\cite[Theorem]{nienhuys}).
\item
Since every continuous character of $\mathbb{Q}$ can be extended
uniquely to a continuous character of $\mathbb{R}$, and every
continuous character of $\mathbb{R}$ restricted to $\mathbb{Q}$ is a
continuous character of $\mathbb{Q}$,
\mbox{$\mathbf{n}(\mathbb{R},\tau)\cap \mathbb{Q}
=\mathbf{n}(\mathbb{Q},\tau_{|\mathbb{Q}})$}. Thus, $\mathbb{Q} \in
\mathcal{B}$.
\item
If \mbox{$\{A_\alpha\}_{\alpha \in I}$} is an arbitrary family in
$\mathcal{B}$, then \mbox{$G \in \mathcal{B}$} for every abelian
group $G$ such that \mbox{$\bigoplus\limits_{\alpha \in I} A_\alpha
\subseteq G \subseteq \prod\limits_{\alpha \in I} A_\alpha$}; in
particular, \mbox{$\bigoplus\limits_{\alpha \in I} A_\alpha \in
\mathcal{B}$}, and \mbox{$\prod\limits_{\alpha \in I} A_\alpha \in
\mathcal{B}$}.
\item
Due to the algebraic structures of abelian groups, it follows from
the foregoing observations that the class $\mathcal{B}$ includes the
following types of abelian groups: free abelian groups, divisible
groups, direct sums of cyclic groups, bounded torsion groups, and
finitely generated groups.
\item
 We feel that we have not exploited the full strength of (d).
 Therefore, this task is left to another day.
\vskip 6pt
\end{myalphlist}
\end{discussion}

\section*{Acknowledgments}

This paper stems from my undergraduate summer research project in 2008,
under the supervision of Dr. G\'abor Luk\'acs. I wish to express my
gratitude to Dr. Luk\'acs for his patience and guidance throughout the
project, and for his extensive help with the preparation of this
manuscript.

I am thankful to Dr. Brendan Goldsmith for the helpful correspondence
concerning the structure of reduced $p$-groups.

We are grateful to Karen Kipper for her kind help in proof-reading this
paper for grammar and punctuation.

Last but not least, I would like to thank the anonymous referee for
her/his helpful comments and suggestions, which led to significant
improvements of the manuscript, not only in form, but also in substance.

{\footnotesize

\bibliography{notes,notes2,notes3}

}

\bigskip
\noindent
Department of Mathematics\\
University of Manitoba\\
Winnipeg, MB, R3T 2N2\\
Canada \\
{\em e-mail: umnguyeb@cc.umanitoba.ca }

\end{document}